\tikzset{
  commutative diagrams/.cd, 
  arrow style=tikz, 
  diagrams={>=cm to}
}
\tikzset{
  c/.style={every coordinate/.try}
}
\newcommand{\subalign}[1]{%
  \vcenter{%
    \Let@ \restore@math@cr \default@tag
    \baselineskip\fontdimen10 \scriptfont\tw@
    \advance\baselineskip\fontdimen12 \scriptfont\tw@
    \lineskip\thr@@\fontdimen8 \scriptfont\thr@@
    \lineskiplimit\lineskip
    \ialign{\hfil$\m@th\scriptstyle##$&$\m@th\scriptstyle{}##$\hfil\crcr
      #1\crcr
    }%
  }%
}
\theoremstyle{theorem}
\newenvironment{customthm}[1]
  {\innercustomthm}
  {\endinnercustomthm}
\def\@tocline#1#2#3#4#5#6#7{\relax
  \ifnum #1>\c@tocdepth 
  \else
    \par \addpenalty\@secpenalty\addvspace{#2}%
    \begingroup \hyphenpenalty\@M
    \@ifempty{#4}{%
      \@tempdima\csname r@tocindent\number#1\endcsname\relax
    }{%
      \@tempdima#4\relax
    }%
    \parindent\z@ \leftskip#3\relax \advance\leftskip\@tempdima\relax
    \rightskip\@pnumwidth plus4em \parfillskip-\@pnumwidth
    #5\leavevmode\hskip-\@tempdima
      \ifcase #1
       \or\or \hskip 1em \or \hskip 2em \else \hskip 3em \fi%
      #6\nobreak\relax
    \dotfill\hbox to\@pnumwidth{\@tocpagenum{#7}}\par
    \nobreak
    \endgroup
  \fi}
\newcommand{\Aut}{\operatorname{Aut}}
\newcounter{marginnote}
\DeclareMathAlphabet{\mathpzc}{OT1}{pzc}{m}{it}
\theoremstyle{theorem}
\newtheorem{theorem}{Theorem}[section]
\newtheorem{lemma}[theorem]{Lemma}
\newtheorem{proposition}[theorem]{Proposition}
\theoremstyle{definition}
\newtheorem{remark}[theorem]{Remark}
\newtheorem*{runningexample*}{Running example}
\newtheorem*{aside*}{Aside}
\newtheorem{construction}[theorem]{Construction}
\newtheorem{definition}[theorem]{Definition}
\newtheorem{example}[theorem]{Example}
\newtheorem{proposition-definition}[theorem]{Proposition-Definition}
\DeclareMathOperator{\Hom}{Hom}
\DeclareMathOperator{\Pic}{Pic}
\DeclareMathOperator{\Cl}{Cl}
\newcommand{\RR}{\mathbb{R}}
\newcommand{\Star}{\operatorname{St}}
\newcommand{\f}{\mathsf{f}}
\newcommand{\Gm}{\mathbb{G}_{\operatorname{m}}}
\newcommand{\op}[1]{\operatorname{#1}}
\newcommand{\bcd}{\begin{center}\begin{tikzcd}}
\newcommand{\ecd}{\end{tikzcd}\end{center}}
\newcommand{\Aaff}{\mathbb{A}}
\newcommand{\PP}{\mathbb{P}}
\newcommand{\ZZ}{\mathbb{Z}}
\newcommand{\OO}{\mathcal{O}}
\newcommand{\N}{\mathbb{N}}
\newcommand{\Z}{\mathbb{Z}}
\newcommand{\Q}{\mathbb{Q}}
\newcommand{\R}{\mathbb{R}}
\newcommand{\Speck}{\operatorname{Spec}\kfield}
\newcommand{\kfield}{\Bbbk}
\newcommand{\Proj}{\operatorname{Proj}}
\newcommand{\Acal}{\mathcal{A}}
\newcommand{\Dcal}{\mathcal{D}}
\newcommand{\Xcal}{\mathcal{X}}
\newcommand{\Bcal}{\mathcal{B}}
\newcommand{\pt}{\operatorname{pt}}
\newcommand{\rtrop}{\mathsf{r}}
\newcommand{\ptrop}{\mathsf{p}}
\newcommand{\p}{\mathsf{p}}
\newcommand{\Spec}{\operatorname{Spec}}
\newcommand{\acts}{\curvearrowright}
\newcommand{\red}[1]{\color{red} {#1} \color{black}}
\NewDocumentCommand{\compatibilitydatum}{m m m m m m O{} O{} O{}}{
\begin{equation*} \begin{tikzcd}[ampersand replacement=\&]
  \: \arrow{r} \& {#1} \arrow{r} \arrow{d}{#7} \& {#2} \arrow{r} \arrow{d}{#8} \& {#3} \arrow{r}{[1]} \arrow{d}{#9} \& \: \\
  \: \arrow{r} \& {#4} \arrow{r} \& {#5} \arrow{r} \& {#6} \arrow{r} \& \:
\end{tikzcd} \end{equation*}}
\NewDocumentCommand{\commutingsquare}{m m m m o O{} O{} O{} O{}}{
\begin{equation}\begin{tikzcd}[ampersand replacement=\&] \label{#5}
  #1 \arrow{r}{#6} \arrow{d}{#7} \& #2 \arrow{d}{#8} \\
  #3 \arrow{r}{#9} \& #4
\end{tikzcd}\IfValueTF{#5}{\label{#5}}{} \end{equation}}
\NewDocumentCommand{\cartesiansquare}{m m m m O{} O{} O{} O{}}{
\begin{equation*}\begin{tikzcd}[ampersand replacement=\&]
  #1 \arrow{r}{#5} \arrow{d}{#6} \arrow[dr, phantom, "\square"] \& #2 \arrow{d}{#7} \\
  #3 \arrow{r}{#8} \& #4
\end{tikzcd} \end{equation*}}
\NewDocumentCommand{\cartesiansquarelabel}{m m m m m O{} O{} O{} O{}}{
\begin{tikzcd}[ampersand replacement=\&]
  #1 \arrow{r}{#6} \arrow{d}{#7} \arrow[dr, phantom, "\square"] \& #2 \arrow{d}{#8} \\
  #3 \arrow{r}{#9} \& #4
\end{tikzcd}\IfValueTF{#5}{\label{#5}}{}
}
\NewDocumentCommand{\triangleofspaces}{m m m O{} O{} O{}}{
\begin{tikzcd} [ampersand replacement=\&]
#1 \arrow{r}{#4} \arrow[bend right]{rr}{#5} \& #2 \arrow{r}{#6} \& #3
\end{tikzcd}}
\begin{document}
 
\title{Tropical expansions and toric variety bundles}
\author{Francesca Carocci and Navid Nabijou}

\begin{abstract}
A tropical expansion is a degeneration of a toroidal embedding, induced by a polyhedral subdivision of its tropicalisation. Each irreducible component of a tropical expansion admits a collapsing map down to a stratum of the original variety. We study the relative geometry of this map. We give a polyhedral criterion for the map to have the structure of a toric variety bundle, and prove that this structure always exists over the interior of the codomain. We give examples demonstrating that this is the strongest statement one can hope for in general. In addition, we provide a combinatorial recipe for constructing the toric variety bundle as a fibrewise GIT quotient of an explicit split vector bundle. Our proofs make systematic use of Artin fans as a language for globalising local toric models.
\end{abstract}

\maketitle
\tableofcontents

\section*{Introduction}

\noindent Given a toroidal embedding $(X|D)$, a tropical expansion is a multi-parameter degeneration of the pair induced by a polyhedral subdivision of the tropicalisation. In recent years these have come to occupy a central role in logarithmic enumerative geometry and moduli theory \cite{NishinouSiebert,MandelRuddat,DhruvExpansions,MR20}. 

\subsection{Results} In this paper we establish structural results on the geometry of tropical expansions, and develop a combinatorial toolkit for working with them in practice.

The irreducible components of a tropical expansion are indexed by the vertices of the corresponding polyhedral subdivision. Given such a vertex $v$, the component $Y_v$ admits a natural collapsing map down to a stratum $X_v$ of the original pair
\[ \rho_v \colon Y_v \to X_v.\]
Since $(X|D)$ is arbitrary the stratum $X_v$ is equally arbitrary, and nothing particular can be said about it. Instead, our goal is to describe the relative geometry of $\rho_v$.

The initial hope is that $\rho_v$ is a toric variety bundle, i.e. a fibrewise equivariant compactification of a principal torus bundle. Unfortunately, this is not always the case: $\rho_v$ can have reducible fibres, or can even fail to be flat.\footnote{These issues do not arise for expansions of smooth pairs. This accounts for much of the simplicity of relative enumerative geometry when compared with its logarithmic counterpart.} Examples are presented in Section~\ref{sec: examples}.

Our first main result shows that $\rho_v$ is a toric variety bundle over the interior $X_v^\circ$ of the stratum $X_v$. We also provide a combinatorial recipe for constructing this bundle.

\begin{customthm}{A}[Theorem~\ref{thm: Yvbullet description}] \label{thm: Yvbullet introduction} Define $Y_v^\bullet \colonequals \rho_v^{-1}(X_v^\circ)$. Then the restricted collapsing morphism 
\[ \rho_v \colon Y_v^\bullet \to  X_v^\circ \]
is a toric variety bundle. It can be constructed as the fibrewise GIT quotient of a direct sum of explicit line bundles on $X_v^\circ$.
\end{customthm}

This gives a complete description of the open subvariety
\[ Y_v^\bullet \hookrightarrow Y_v.\]
If the stratum $X_v$ is minimal, then $X_v^\circ=X_v$ and so $Y_v^\bullet=Y_v$. This is often the case in applications, e.g. over torus-fixed loci in the moduli space of expanded stable maps \cite{DhruvExpansions}. When $X_v$ is not minimal, Theorem~\ref{thm: Yvbullet introduction} instead gives a cut-and-paste description of $Y_v \to X_v$ as a stratified union of toric variety bundles (Remark~\ref{rmk: cut and paste}).

Our second main result is a criterion determining when the above description extends to the entire component. This generalises a well-known criterion in toric geometry \cite[Theorem~3.3.19]{CLS}.
\begin{customthm}{B}[Theorem~\ref{thm: Yv description}] \label{thm: Yv introduction}
Let $\Upsilon/\omega_v$ and $\Sigma/\sigma_v$ be the tropicalisations of $Y_v$ and $X_v$ respectively, and let $\Phi_v \hookrightarrow \Upsilon/\omega_v$ be the fibre fan constructed in Definition~\ref{def: fibre fan}. The collapsing morphism $\rho_v \colon Y_v \to X_v$ is a toric variety bundle if there is an embedding of cone complexes
\[ \Sigma/\sigma_v \to \Upsilon/\omega_v \]
which is a section of the projection $\Upsilon/\omega_v \to \Sigma/\sigma_v$, and is such that $\Upsilon/\omega_v$ is the internal sum:
\[ \Upsilon/\omega_v = \Sigma/\sigma_v + \Phi_v. \]
In this case, $Y_v$ is also a fibrewise GIT quotient of a direct sum of line bundles on $X_v$. When $X$ is smooth there is a combinatorial algorithm expressing these line bundles in terms of piecewise-linear functions on tropicalisations (Section~\ref{sec: determining the mixing collection for Yv}).
\end{customthm}
These are the strongest statements one can hope for in general. The morphism $Y_v \to X_v$ can be an arbitrary toroidal modification of a toric variety bundle. Given this, Theorem~\ref{thm: Yvbullet introduction} allows us to completely describe its behaviour over the interior, and Theorem~\ref{thm: Yv introduction} provides a criterion for this description to persist over the boundary.

As necessary background, in Section~\ref{sec: toric variety bundles} we study toric variety bundles. We establish several useful facts, including equivalences between three complementary approaches.

\subsection{Context} The study of tropical expansions has a long history. They were introduced, for toric pairs $(X|D)$ and with one-dimensional base cone, in \cite{NishinouSiebert}. In this setting they form an important special case of the Mumford degeneration \cite{MumfordAbelian}.

It has long been known that this construction extends beyond the toric case to the setting of toroidal embeddings. Despite this, there has been to our knowledge no systematic treatment of the geometry of such expansions, and in particular no general recipe for constructing $Y_v$ from $X_v$. Whereas in the toric setting all such components are again toric varieties with explicitly determined fans, this is not true in general, since the geometry of $(X|D)$ is unconstrained.

 One of the fundamental insights of \cite{DhruvExpansions,MR20} is that it is sensible and profitable to consider tropical expansions in a universal fashion, i.e. over arbitrary base cones. This is a crucial step in the construction of logarithmic Gromov--Witten and Donaldson--Thomas invariants via expanded targets. Over arbitrary base cones, flatness of the degenerating family becomes much more delicate \cite{AbramovichKaru,MolchoSS}.
 
 The present paper is a companion to \cite{CN21}, which describes the higher-rank rubber torus acting on the tropical expansions which appear as targets in the moduli space of expanded stable maps. Taken together, these works provide an avenue for probing the recursive structure of these moduli spaces.



\subsection{Acknowledgements} We have benefited from numerous conversations with Luca~Battistella, Sam~Molcho, Leonid~Monin, and Dhruv~Ranganathan. Heartfelt thanks are owed to the anonymous referee, who supplied several key suggestions helping us to remove a superfluous smoothness hypothesis in Sections~\ref{sec: Yvbullet} and \ref{sec: Yv closed stratum}, and identified an important conceptual mistake in an earlier version of the proof of Theorem~\ref{thm: Yv description}.

 F.C. was supported by the EPFL Chair of Arithmetic Geometry (ARG), and N.N. was supported by the Herchel Smith Fund.

\section{Toroidal preliminaries} \label{sec: toroidal preliminaries}

\noindent We review the theory of toroidal embeddings, with a heavy bias towards tropicalisations and Artin fans. For further reading, see \cite{KKMSD} and \cite[Section~1]{AbramovichKaru}. We assume familiarity with the basics of toric varieties \cite{FultonToric,CLS}.

\subsection{Toroidal embeddings}\label{sec:toroidal}
Fix a normal variety $X$ and a reduced divisor $D \subseteq X$.

\begin{definition} The pair $(X|D)$ is a \textbf{toroidal embedding} (without self-intersections) if, Zariski-locally, there exists a strictly convex rational polyhedral cone $\sigma$ and a smooth morphism
\begin{equation} \label{eqn: toric model} p \colon (X|D)|_U \to (U_\sigma |\, \partial U_\sigma) \end{equation}
such that $p^{-1}(\partial U_\sigma)=D|_U$. Here $U_\sigma= \Speck [S_\sigma]$ is the associated affine toric variety and $\partial U_\sigma \subseteq U_\sigma$ is the toric boundary. The local model $U_\sigma$ is determined, up to torus factors, by the open set $U$ (however, there are choices for the morphism $p \colon U \to U_\sigma$).

A toroidal morphism $f \colon (X|D)\to (Y|E)$ is a morphism $f\colon X\to Y$ such that $f^{-1}(E) \subseteq D$ set-theoretically. Locally, a toroidal morphism restricts to a toric morphism on toric models.
\end{definition}

\begin{remark} An important class of toroidal embeddings are the simple normal crossings pairs. These consist of a smooth variety $X$ and a divisor $D = D_1+\ldots+D_k \subseteq X$ with the $D_i$ smooth and intersecting transversely. The local model around the intersection of $l$ divisor components is $\Aaff^l$.
\end{remark}

\begin{remark}
The divisor $D$ defines a stratification of $X$. When $D=D_1+\ldots+D_k$ is simple normal crossings, the locally-closed strata are precisely the connected components of the loci
\[D_I^\circ = \bigcap_{i \in I} D_i \setminus \bigcup_{i\notin I} D_i \]
for arbitrary $I \subseteq \{1,\ldots,k\}$.
\end{remark}

\begin{remark} Toroidal embeddings are usually defined with respect to the \'etale topology, which permits irreducible components of $D$ with self-intersections. We restrict to the Zariski topology to exclude such self-intersections, ensuring that every closed stratum of $(X|D)$ is again a toroidal embedding. This is necessary to describe the relative geometry of tropical expansions. Self-intersections can always be removed via toroidal modifications (see Section~\ref{sec: toroidal modification}).
\end{remark}

\subsection{Tropicalisations} A \textbf{cone} is a pair
\[ \sigma = (\sigma_\R, N_\sigma) \]
where $N_\sigma$ is a lattice and $\sigma_{\R} \subseteq N_\sigma \otimes \RR$ is a strictly convex rational polyhedral cone. A \textbf{map} of cones $\sigma_1 \to \sigma_2$ is a lattice morphism $N_{\sigma_1} \to N_{\sigma_2}$ sending $\sigma_{1\R}$ into $\sigma_{2\R}$. A \textbf{face map} is a map such that $N_{\sigma_1} \to N_{\sigma_2}$ is injective with saturated image, and the image of $\sigma_{1\R}$ is a face of $\sigma_{2\R}$.

An abstract \textbf{cone complex} is a collection of cones glued together along faces \cite[Section~2]{AbramovichCaporasoPayne}. Formally, it is a diagram of cones connected by face maps, such that every face of every cone appears as the image of a face map in the diagram. We prohibit non-identity isomorphisms, and the presence of more than one face map between a given pair of cones.

To every toroidal embedding we associate an abstract cone complex: the tropicalisation. This generalises the fan of a toric variety.

\begin{definition}\label{tropicalisation}
The \textbf{tropicalisation} $\Sigma(X|D)$ is the following cone complex:
\begin{itemize}
\item Cones: Every locally-closed stratum $S \subseteq X$ has an associated cone $\sigma_S$ corresponding to the local toric model of $(X|D)$ in a Zariski neighbourhood of $S$.\footnote{If the local toric model has torus factors, these are removed by quotienting by the subtorus of the big torus which acts freely. This is equivalent to restricting to the lattice spanned by the cone $\sigma_S$, and ensures uniqueness of the local model. See also Remark~\ref{rmk: defn of sigmaS} for an alternative description.} We take the collection of all such cones $\sigma_S$.\medskip
\item Face maps: If $S_2 \subseteq \overline{S_1}$ then there is an associated face map $\sigma_{S_1} \hookrightarrow \sigma_{S_2}$. We take the collection of all such face maps.
\end{itemize}
\end{definition}

Cones $\sigma \in \Sigma(X|D)$ correspond bijectively to strata of $(X|D)$. This correspondence is inclusion-reversing, and the dimension of a cone is the codimension of the stratum. 

If $(X|D)$ is a simple normal crossings pair with $X$ connected, then $\Sigma(X|D)$ coincides with the cone over the dual intersection complex of $D$. In particular all the cones are smooth, i.e. unimodular. The apex corresponds to the open stratum $X \setminus D$.

If $(X|D)$ is a toric pair then $\Sigma(X|D)$ is isomorphic to the fan of $X$. However, the tropicalisation does not come with a preferred embedding into a vector space; such an embedding depends on the choice of a torus action.

\begin{remark} \label{rmk: defn of sigmaS} The cone $\sigma_S$ associated to a locally-closed stratum $S$ has the following alternative description. Consider the open neighbourhood $U_S \subseteq X$ of $S$ given as the union of locally-closed strata whose closures contain $S$. Then $\sigma_S$ is the cone dual to the monoid of effective divisors in $U_S$ that are supported on $U_S \cap D$. For example, the locally-closed stratum $S=D_i^\circ$ has
\[ U_S = X \setminus \cup_{j \neq i} D_j. \]
The monoid is $\N$ with generator $D_i^\circ$, and so $\sigma_S = \RR_{\geq 0}$. For further details see \cite[Section~1.3]{AbramovichKaru}.
\end{remark}

\subsection{Artin fans} Every cone complex has an algebro-geometric avatar: its Artin fan. This provides a bridge between the geometry of $(X|D)$ and the combinatorics of $\Sigma(X|D)$. For detailed discussions, see \cite{AbramovichWiseBirational,AbramovichChenMarcusWise,AbramovichEtAlSkeletons}.

\begin{definition} An \textbf{Artin cone} is a quotient stack of an affine toric variety by its dense torus
\[ \Acal_\sigma \colonequals [U_\sigma/T_\sigma]. \]
\end{definition}
The assignment $\sigma \mapsto \Acal_\sigma$ is functorial. When the morphism $\sigma_1 \hookrightarrow \sigma_2$ is a face map the associated morphism of Artin cones is an open embedding:
\[ \Acal_{\sigma_1} \hookrightarrow \Acal_{\sigma_2}. \]
For example, the inclusion $\RR_{\geq 0} \hookrightarrow \RR_{\geq 0}^2$ of the first coordinate ray gives the open embedding
\[ [\Aaff^{\!1}/\Gm ] = [(\Aaff^{\!1} \times \Gm)/\Gm^2] \hookrightarrow [\Aaff^{\!2}/\Gm^2].\]

\begin{definition} The \textbf{Artin fan} $\Acal_\Sigma$ of a cone complex $\Sigma$ is the colimit, in the category of algebraic stacks, of the diagram consisting of
\begin{itemize}
\item Artin cones $\Acal_\sigma$ for $\sigma \in \Sigma$,
\item open embeddings $\Acal_{\sigma_1} \hookrightarrow \Acal_{\sigma_2}$ for $\sigma_1 \hookrightarrow \sigma_2$.
\end{itemize}
It is an irreducible stack of pure dimension zero. The Artin fan of a toroidal embedding $(X|D)$ is the Artin fan of its tropicalisation
\[ \Acal_{X|D} \colonequals \Acal_{\Sigma(X|D)}.\]
This can be thought of as a finite topological space, with points corresponding to the strata of $(X|D)$. The topology is the order topology, induced by strata inclusions.
	\end{definition}

\begin{remark}	
The assignment $\Sigma \mapsto \Acal_\Sigma$ gives an equivalence between the $2$-categories of cone complexes (strictly speaking, cone stacks) and Artin fans \cite[Theorem~3]{CavalieriChanUlirschWise}. Happily, the distinction between a cone complex and its Artin fan has grown increasingly blurred.
\end{remark}


The Artin fan $\Acal_{X|D}$ globalises the local toric models for $(X|D)$. Given an open set $U$ and a model $(X|D)|_U \to (U_\sigma | \, \partial U_\sigma)$, the composite morphism
\[ U \rightarrow U_\sigma \to \Acal_\sigma \hookrightarrow \Acal_{X|D} \]
does not depend on the choice of morphism $U \to U_\sigma$. These local morphisms glue to produce a smooth structure morphism 
\begin{equation} p \colon X \to \Acal_{X|D}.\end{equation}
This morphism encodes the toroidal structure on $X$, since $p^{-1}(\partial \Acal_{X|D}) = D$. More generally, given a scheme $X$ and a cone complex $\Sigma$, an arbitrary morphism
\[ p \colon X \to \Acal_\Sigma \] 
defines a \textbf{logarithmic structure} on $X$ via pullback from $\Acal_\Sigma$. The resulting logarithmic scheme is logarithmically smooth (i.e. a toroidal embedding) if and only if $p$ is smooth in the usual sense.

\begin{example} \label{example: snc with connected strata} Let $(X|D=D_1+\ldots+D_k)$ be a simple normal crossings pair and assume that all the strata of $D$ are nonempty and connected. Then $\Sigma(X|D)=\RR_{\geq 0}^k$ and so
\[ \Acal_{X|D} = [\Aaff^{\!k}/\Gm^k]=[\Aaff^{\! 1}/\Gm]^k.\]
 By the definition of the stack quotient, a morphism $X \to [\Aaff^{\! 1}/\Gm]$ consists of a pair $(L,s)$ where $L$ is a line bundle on $X$ and $s \in H^0(X,L)$. In this case, the structure morphism $p \colon X \to \Acal_{X|D}$ encodes the pairs
\[ (\OO_X(D_1),s_{D_1}),\ldots,(\OO_X(D_k),s_{D_k})\]
corresponding to the irreducible components $D_1,\ldots,D_k$.
\end{example}

For a general toroidal embedding, a choice of morphism
\[ \label{eqn: map Artin fan to A1 mod Gm} \Acal_{X|D} \to [\Aaff^{\! 1}/\Gm] \]
determines a pair $(L,s)$ on $X$ via the composite
\[ X \to \Acal_{X|D} \to [\Aaff^{\! 1}/\Gm].\]
Morphisms $\Acal_{X|D} \to [\Aaff^{\!1}/\Gm]$ correspond bijectively to morphisms $\Sigma(X|D) \to \RR_{\geq 0}$ where the target is a cone with lattice $\Z$. Hence this construction generalises the correspondence between piecewise-linear functions on a fan and toric Cartier divisors on the associated toric variety.

\subsection{Isotropic cone complexes} \label{sec: isotropic cone complex} Strata in Artin fans have generically nontrivial isotropy, hence are not Artin fans themselves. To describe these combinatorially, we introduce the language of isotropic cone complexes and isotropic Artin fans. To our knowledge, this is the first treatment of this (relatively straightforward) extension of the theory of cone complexes and Artin fans.

An \textbf{isotropic cone} is a triple
\[ \sigma = (\sigma_{\RR},N_\sigma,N_\sigma^\prime) \]
where $(\sigma_{\RR},N_\sigma)$ is a cone and $N_\sigma^\prime \to N_\sigma$ is a surjection of lattices. Given an isotropic cone we obtain a surjection of tori $T_\sigma^\prime \to T_\sigma$ and an action $T_\sigma^\prime \curvearrowright U_\sigma$. The associated \textbf{isotropic Artin cone} is the stack quotient:
\[ \Bcal_\sigma \colonequals [U_\sigma/T_\sigma^\prime].\]	
A \textbf{map of isotropic cones} $\sigma_1 \to \sigma_2$ consists of commuting lattice maps
\begin{equation}\label{eqn: square map of isotropic cones}
\begin{tikzcd}
N_{\sigma_1}^\prime \ar[r] \ar[d] & N_{\sigma_2}^\prime \ar[d] \\
 N_{\sigma_1} \ar[r] & N_{\sigma_2}	
\end{tikzcd}
\end{equation}
such that the map $N_{\sigma_1} \otimes \RR \to N_{\sigma_2} \otimes \RR$ sends $\sigma_{1\RR}$ into $\sigma_{2\RR}$. A map of isotropic cones induces a morphism of the associated isotropic Artin cones.

A \textbf{face map} is a map of isotropic cones such that the morphism $(\sigma_{1\RR},N_{\sigma_1}) \to (\sigma_{2\RR},N_{\sigma_2})$ is a face map of ordinary cones, and the square \eqref{eqn: square map of isotropic cones} is cartesian; equivalently, the induced map $K_{\sigma_1} \to K_{\sigma_2}$ on kernels is an isomorphism. Face maps induce open embeddings of isotropic Artin cones.

An \textbf{isotropic cone complex} $\Sigma^\prime$ is a diagram of isotropic cones connected by face maps, such that every face of every isotropic cone appears as the image of a face map in the diagram. As before, we prohibit non-identity isomorphisms and the presence of more than one face map between a given pair of isotropic cones. We will only consider isotropic cone complexes which are connected. Given any face map $\sigma_1 \to \sigma_2$, the condition that \eqref{eqn: square map of isotropic cones} is cartesian produces a natural isomorphism
\[ K_{\sigma_1} \colonequals \operatorname{Ker}(N_{\sigma_1^\prime} \to N_{\sigma_1}) \cong \operatorname{Ker}(N_{\sigma_2}^\prime \to N_{\sigma_2}) \equalscolon K_{\sigma_2} . \]
Since $\Sigma^\prime$ is connected, this kernel functions as a canonical \textbf{global isotropy group}. We denote it by
\[ K \colonequals K(\Sigma^\prime).\]
Note that $K = N_0^\prime$ where $0 \in \Sigma^\prime$ is the unique minimal isotropic cone.

Given an isotropic cone complex $\Sigma^\prime$ the associated \textbf{isotropic Artin fan} is the colimit:
\[ \Bcal_{\Sigma^\prime} \colonequals \varinjlim_{\sigma \in \Sigma^\prime} [U_\sigma/T_\sigma^\prime].\] 
This construction is functorial. The unique minimal isotropic cone $0 \in \Sigma^\prime$ corresponds to the dense open stratum $\Bcal T_K \hookrightarrow \Bcal_{\Sigma^\prime}$ where $T_K \colonequals K \otimes \Gm$.

An ordinary cone complex $\Sigma$ can be viewed as an isotropic cone complex by taking $N_{\sigma}^\prime \colonequals N_{\sigma}$ for all $\sigma \in \Sigma$. In this case the isotropic Artin fan coincides with the ordinary Artin fan.

Conversely an isotropic cone complex $\Sigma^\prime$ induces an ordinary cone complex $\Sigma$ by forgetting the lattices $N_\sigma^\prime$. There is a map of isotropic cone complexes $\Sigma^\prime \to \Sigma$ which induces a morphism of isotropic Artin fans
\begin{equation} \label{eqn: map isotropic Artin fan to ordinary Artin fan} \Bcal_{\Sigma^\prime} \to \Acal_\Sigma.\end{equation}

\begin{lemma} \label{lem: isotropic Artin fan as a gerbe} Consider an isotropic cone complex $\Sigma^\prime$ with global isotropy group $K$ and underlying cone complex $\Sigma$. The morphism \eqref{eqn: map isotropic Artin fan to ordinary Artin fan} is a gerbe banded by $T_K \colonequals K \otimes \Gm$.	
\end{lemma}

\begin{proof} Locally on the base, the morphism takes the form
\[ [U_\sigma/T^\prime_\sigma] \to [U_\sigma/T_\sigma]. \]
Moreover, there is a short exact sequence:
\[ 0 \to T_K \to T^\prime_\sigma \to T_\sigma \to 0.\]
The claim then follows from Lemma~\ref{lem: two quotients compared} below.
\end{proof}

\begin{lemma} \label{lem: BH to BG a principal bundle} Consider a short exact sequence of algebraic tori:
\[ 0 \to H \to G \to G/H \to 0. \]
Then the following square, where $\Bcal 0 = \Speck$, is cartesian:
\bcd
\Bcal H \ar[r] \ar[d] \ar[rd,phantom,"\square"] & \Bcal 0 \ar[d] \\
\Bcal G \ar[r] & \Bcal (G/H).
\ecd
In particular, $\Bcal H \to \Bcal G$ is a principal $G/H$-bundle, and $\Bcal G \to \Bcal (G/H)$ is a gerbe banded by $H$.
\end{lemma}

\begin{proof} Fix an arbitrary test scheme $S$ with a morphism $S \to \Bcal G$ corresponding to a principal $G$-bundle $P_G$. The short exact sequence
\[ 0 \to H \to G \to G/H \to 0 \]
induces a long exact cohomology sequence
\begin{equation} \label{eqn: long exact cohomology sequence torsors} \cdots \to H^0(S,G/H) \to H^1(S,H) \to H^1(S,G) \to H^1(S,G/H) \to \cdots \end{equation}
Exactness at $H^1(S,G)$ shows that the principal $G/H$-bundle $P_G/H$ is trivial if and only if $P_G \cong (P_H \times G)/H$ for some principal $H$-bundle $P_H$. This proves that the square is cartesian. Note that exactness at $H^1(S,H)$ shows that the set of lifts $P_H$ of $P_G$ forms a torsor under $G/H$.\end{proof}

\begin{lemma} \label{lem: two quotients compared} Consider a short exact sequence of algebraic tori:
\[ 0 \to H \to G \to G/H \to 0. \]
Given an algebraic stack $X$ and an action $G/H \curvearrowright X$, let $G \curvearrowright X$ be the action induced by the homomorphism $G \to G/H$. Then the following square is cartesian
\bcd
{[X/G]} \ar[r] \ar[d] \ar[rd,phantom,"\square"] & {[X/(G/H)]} \ar[d] \\
\Bcal G \ar[r] & \Bcal(G/H)
\ecd
and the horizontal morphisms are gerbes banded by $H$.
\end{lemma}

\begin{proof}
Fix an arbitrary test scheme $S$. A morphism from $S$ to the fibre product is given by the
following data: a principal $G$-bundle $P_G \to S$, a principal $(G/H)$-bundle $P_{G/H} \to S$, a $(G/H$)-equivariant morphism $P_{G/H} \to X$ and an isomorphism $P_G/H \cong P_{G/H}$. The composition $P_G \to P_{G/H} \to X$ is $G$-equivariant, producing a morphism from $S$ to $[X/G]$. This shows that the square is cartesian. It then follows from the previous lemma that the horizontal morphisms are gerbes banded by $H$.	
\end{proof}

\begin{remark} Lemma~\ref{lem: BH to BG a principal bundle} admits a convenient mnemonic. Writing $A/B$ to denote the fibre of a map $A \to B$, the fact that $\Bcal H \to \Bcal G$ is a principal $G/H$-bundle can be formulated as:
\[ \dfrac{1/H}{1/G} = G/H.\]
Similarly, the fact that $\Bcal G \to \Bcal(G/H)$ is a gerbe banded by $H$ can be formulated as:
\[ \dfrac{1/G}{1/(G/H)} = 1/H. \]
\end{remark}

\subsection{Strata in Artin fans} \label{sec: strata in Artin fans} We now use the theory of isotropic cone complexes to study strata in Artin fans. In the toric context, a cone $\sigma \in \Sigma$ defines three loci in $X_\Sigma$: the affine open $U_\sigma$, the locally-closed orbit $O_\sigma$, and the closed stratum $Z_\sigma$. We discuss the analogues of each of these in the setting of Artin fans.

Fix an ordinary cone complex $\Sigma$ and a cone $\sigma_0 \in \Sigma$. The associated affine open is the Artin cone
\[ \Acal_{\sigma_0} \hookrightarrow \Acal_\Sigma\]
and the associated locally-closed stratum is the composite
\[ \Bcal T_{\sigma_0} \hookrightarrow \Acal_{\sigma_0} \hookrightarrow \Acal_{\Sigma}.\]
The associated closed stratum will be described via an isotropic cone complex. Consider a cone $\sigma \in \Sigma$ such that $\sigma_0 \subseteq \sigma$. We let
\[ \sigma/\sigma_0 \]
denote the image of $\sigma$ under the quotient morphism $N_\sigma \otimes \RR \to (N_\sigma/N_{\sigma_0}) \otimes \RR$. This is strictly convex because $\sigma_0 \subseteq \sigma$ is a face. We then denote and define the \textbf{isotropic star complex} of $\sigma_0 \in \Sigma$:
\[ \Star(\sigma_0, \Sigma) \colonequals \big\{ (\sigma/\sigma_0, N_{\sigma/\sigma_0}, N_\sigma) \mid \sigma_0 \subseteq \sigma \big\}.\]
We write $\sigma=(\sigma/\sigma_0,N_{\sigma/\sigma_0},N_{\sigma})$ for the isotropic cone corresponding to $\sigma$. The isotropic cone complex $\Star(\sigma_0,\Sigma)$ is connected: it has a unique minimal isotropic cone $\sigma_0=(0,0,N_{\sigma_0})$ with generic isotropy $N_{\sigma_0}$. The associated isotropic Artin fan is
\[ \Bcal_{\Star(\sigma_0,\Sigma)} \colonequals \varinjlim [U_{\sigma/\sigma_0}/T_\sigma] \]
where the colimit is over cones $\sigma \in \Sigma$ with $\sigma_0 \subseteq \sigma$. It contains $\Bcal T_{\sigma_0}$ as an open dense stratum.

For each $\sigma \in \Star(\sigma_0,\Sigma)$ there is an embedding $U_{\sigma/\sigma_0} \hookrightarrow U_\sigma$ giving the closed stratum corresponding to the face $\sigma_0 \subseteq \sigma$. This is $T_\sigma$-equivariant and hence descends to a closed embedding $[U_{\sigma/\sigma_0}/T_\sigma] \hookrightarrow \Acal_\sigma$. These glue to produce a closed embedding
\[ \Bcal_{\Star(\sigma_0,\Sigma)} \hookrightarrow \Acal_\Sigma\]
which gives the closed stratum corresponding to $\sigma_0 \in \Sigma$.

As in the previous section, from the isotropic cone complex $\Star(\sigma_0,\Sigma)$ we obtain an ordinary cone complex by forgetting the additional lattices. We refer to this as the \textbf{reduced star complex}:
\[ \Sigma/{\sigma_0} \colonequals \{ (\sigma/\sigma_0, N_{\sigma/\sigma_0}) \mid \sigma \in \Star(\sigma_0,\Sigma)\}.\]
As in Section~\ref{sec: isotropic cone complex}, there is a morphism
\begin{equation} \label{eqn: map isotropic star fan to reduced star fan} \Bcal_{\Star(\sigma_0,\Sigma)} \to \Acal_{\Sigma/\sigma_0} \end{equation}
which is a gerbe banded by $T_{\sigma_0}$. 

\begin{remark} \label{rmk: isotropic Artin fan smooth complexes}
Suppose that all the cones of $\Star(\sigma_0,\Sigma)$ are smooth. Then every lattice $N_{\sigma}$ has a natural basis given by the primitive ray generators. This defines natural splittings $N_{\sigma} = N_{\sigma_0} \times N_{\sigma/\sigma_0}$ which are compatible under restricting to faces. We therefore obtain a natural trivialisation of \eqref{eqn: map isotropic star fan to reduced star fan}: $\Bcal_{\Star(\sigma_0,\Sigma)} \cong \Acal_{\Sigma/\sigma_0} \times \Bcal T_{\sigma_0}$.
\end{remark}

Now let $(X|D)$ be a toroidal embedding with tropicalisation $\Sigma=\Sigma(X|D)$. Let $\sigma \in \Sigma$ be a cone and $X_\sigma \hookrightarrow X$ the corresponding closed stratum. There is a cartesian square
\[
\begin{tikzcd}
X_{\sigma} \ar[r] \ar[d] \ar[rd,phantom,"\square"] & X \ar[d] \\
\Bcal_{\Star(\sigma,\Sigma)} \ar[r] & \Acal_{\Sigma}.	
\end{tikzcd}
\]
The stratum $X_{\sigma}$ is itself a toroidal embedding, with boundary consisting of the intersection with the components of $D$ that do not contain $X_{\sigma}$. The map to its Artin fan is the composite
\[ X_\sigma \to \Bcal_{\Star(\sigma,\Sigma)} \to \Acal_{\Sigma/\sigma}.\]

Now let $\rho \colon Y \to X$ be a morphism of toroidal embeddings. Consider a cone $\sigma_Y \in \Sigma_Y$ and let $\sigma_X \in \Sigma_X$ be the minimal cone containing the image of $\sigma_Y$. Then $\rho$ restricts to a morphism between the corresponding closed strata
\[ Y_{\sigma_Y} \to X_{\sigma_X}.\]
This is a toroidal morphism, i.e. there is a $2$-commuting square
\begin{equation} \label{eqn: commuting square boundary strata}
\begin{tikzcd}
Y_{\sigma_Y} \ar[r] \ar[d] \ar[rd,phantom] & X_{\sigma_X} \ar[d] \\
\Acal_{\Sigma_Y/\sigma_Y} \ar[r] & \Acal_{\Sigma_X/\sigma_X}.	
\end{tikzcd}
\end{equation}

\subsection{Toroidal modifications} \label{sec: toroidal modification}
A central theme in toroidal geometry is that toric constructions on the Artin fan pull back to toroidal constructions on $X$. We discuss an important instance of this phenomenon: toroidal modifications.

Following \cite[Definition 1.3.1]{MR20} we permit subdivisions which are injective but not surjective on supports. This flexibility is useful for applications in enumerative geometry.
\begin{definition} \label{def: subdivision} Let $\Sigma$ be a cone complex. An \textbf{open subdivision} of $\Sigma$ is a morphism of cone complexes
\begin{equation*} \Sigma^\dag \to \Sigma \end{equation*}
such that the induced map on supports $|\Sigma^\dag| \to |\Sigma|$ is injective, and such that the lattice of every cone in $\Sigma^\dag$ is mapped onto a saturated sublattice of the lattice of a cone in $\Sigma$.\end{definition}

\begin{definition} Consider a toroidal embedding $(X|D)$ and let $\Sigma=\Sigma(X|D)$. Given an open subdivision $\Sigma^\dag \to \Sigma$, the associated \textbf{open toroidal modification} is defined as the fibre product
\[
\begin{tikzcd}
X^\dag \ar[r] \ar[d,"p^\dag"] \ar[rd,phantom,"\square" xshift=0.1cm] & X \ar[d,"p"] \\
\Acal_{\Sigma^\dag} \ar[r] & \Acal_\Sigma.	
\end{tikzcd}
\]
The morphism $X^\dag \to X$ is birational and an isomorphism over $X \setminus D$. It is proper and surjective if the open subdivision $\Sigma^\dag \to \Sigma$ is surjective on supports, i.e. if it is a subdivision in the usual sense. We set $D^\dag = (p^\dag)^{-1}(\partial \Acal_{\Sigma^\dag})$ and thus obtain a toroidal morphism
\[ (X^\dag|D^\dag) \to (X|D).\]
\end{definition}
Toroidal (and, more generally, logarithmic) modifications are central to the modern study of intersections on moduli spaces \cite{RanganathanSantosParkerWise1, RanganathanSantosParkerWise2, BattistellaCarocci, DhruvExpansions, HolmesPixtonSchmitt, RanganathanProducts, MaxContacts, MolchoPandharipandeSchmitt, MolchoRanganathan, BNR2,HMPPS}.

\section{Tropical expansions}\label{sec: tropical expansions}
\noindent Let $(X|D)$ be a toroidal embedding with tropicalisation $\Sigma=\Sigma(X|D)$. We consider tropical expansions of $\Sigma$ parametrised by an arbitrary base cone $\tau$. These have been studied by many authors, in various guises and levels of generality \cite{MumfordAbelian,NishinouSiebert,BurgosGilSombra,FosterRanganathan,MR20}.
\begin{definition}\label{def: tropical expansion general} A \textbf{tropical expansion} of $\Sigma$ over $\tau$ consists of an open subdivision of cone complexes
\begin{equation*} \Upsilon = (\Sigma \times \tau)^\dag \to \Sigma \times \tau \end{equation*}
such that the projection $\p \colon \Upsilon \to \tau$ satisfies the following conditions for every cone $\omega \in \Upsilon$
\begin{itemize}
\item $\p$ maps $\omega$ surjectively onto a face of $\tau$ ($\p$ is combinatorially flat);
\item $\p$ maps $N_{\omega}$ surjectively onto the lattice of its image face ($\p$ is combinatorially reduced).
\end{itemize}
\end{definition}

There is a diagram of cone complexes
\begin{equation} \label{eqn: diagram Upsilon mapping to Sigma and tau}
\begin{tikzcd}
\Upsilon \ar[r,"\mathsf{r}"] \ar[d,"\mathsf{p}"] & \Sigma \\
\tau	
\end{tikzcd}
\end{equation}
with $\p$ combinatorially flat and reduced. The open subdivision $\Upsilon \to \Sigma \times \tau$ gives rise to an open toroidal modification $\Xcal_\Upsilon \to X \times U_\tau$ where $U_\tau = \Speck[S_\tau]$ is the corresponding affine toric variety. There is a diagram of schemes
\[
\begin{tikzcd}
\Xcal_\Upsilon \ar[r,"\rho"] \ar[d,"\pi"] & X \\
U_\tau.	
\end{tikzcd}
\]
The conditions on $\p$ ensure that the morphism $\pi$ is flat with reduced fibres \cite[Lemmas 4.1 and 5.2]{AbramovichKaru}. The morphism $\rho$ collapses every fibre of $\pi$ onto the unexpanded target $X$. We refer to this diagram as a \textbf{tropical expansion} of $(X|D)$ over $U_\tau$.

Tropical expansions encompass several familiar constructions, including the Mumford degeneration in toric geometry and the degeneration to the normal cone of a complete intersection. For further examples, see \cite[Section~4]{CN21} and Section~\ref{sec: examples}.

\subsection{Families of polyhedral subdivisions} Fix a tropical expansion $\Upsilon \to \Sigma \times \tau$. For every point $\f \in |\tau|$ we intersect the fibre $\p^{-1}(\f) \subseteq |\Upsilon|$ with the cones of $\Upsilon$ to produce a polyhedral complex
\[ \Upsilon_{\f}.\]
Precisely, this is a diagram of abstract polyhedra connected by face maps, where a polyhedron is the intersection of finitely many closed half-spaces in a real vector space, and an isomorphism of polyhedra is an affine map of vector spaces which identifies the polyhedra. The poset structure is elucidated in Section~\ref{sec: polyhedral conical} below.

This polyhedral complex has support $|\Upsilon_{\f}| = \p^{-1}(\f) \subseteq |\Sigma|$. We view $\p$ as a family of open polyhedral subdivisions of $\Sigma$ parametrised by $\f \in |\tau|$. The \textbf{combinatorial type} of $\Upsilon$ at $\f$ is the data of the polyhedral complex $\Upsilon_{\f}$ together with:
\begin{enumerate}
\item For each polyhedron $P \in \Upsilon_{\f}$ the minimal cone $\sigma_P \in \Sigma$ such that $|P| \subseteq |\sigma_P|$.
\item For each oriented edge $\vec{E} \in \Upsilon_{\f}$ the integral slope $m_{\vec{E}} \in N_{\sigma_E}$.	
\end{enumerate}
This data is constant on the relative interior of each face of $\tau$. The lengths of the bounded edges of the polyhedra in $\Upsilon_\f$ depend on the precise choice of point $\f$ in the base. Specialising $\f$ to a face of $\tau$ collapses certain polyhedra in $\Upsilon_{\f}$, giving rise to a simpler combinatorial type. In the limit $\f=0$ we obtain a cone complex
\[ \Sigma^\dag=\Upsilon_0 \]
called the asymptotic cone complex. The restriction of $\mathsf{r}$ in \eqref{eqn: diagram Upsilon mapping to Sigma and tau} from $\Upsilon$ to $\Upsilon_0$ exhibits $\Sigma^\dag$ as an open (conical) subdivision of $\Sigma$. The associated open toroidal modification $X^\dag \to X$ is the general fibre of $\pi$.

\subsection{Polyhedral-conical dictionary} \label{sec: polyhedral conical}

The poset of cones $\omega \in \Upsilon$ is equal to the poset of  pairs
\[ (\kappa, P)\]
where $\kappa \subseteq \tau$ is a face and $P$ is a polyhedron in $\Upsilon_{\f}$ for $\f \in |\kappa|$ an arbitrary interior point. Under this correspondence we have $\p(\omega)=\kappa$. Moreover,
\[ (\kappa_1,P_1) \leq (\kappa_2,P_2) \]
if and only if $\kappa_1 \subseteq \kappa_2$ and $P_1 \subseteq \widetilde{P}_2$, where $\widetilde{P}_2$ is the limit of $P_2$ under the specialisation from the combinatorial type over $\kappa_2$ to the combinatorial type over $\kappa_1$. We have
\[ \dim \omega = \dim \kappa + \dim P.\]

\subsection{Central fibre}\label{sec: central fibre} The central fibre of $\pi$ over the torus-fixed point $0 \in U_\tau$ will be denoted
\begin{equation*} Y_\Upsilon \colonequals \pi^{-1}(0).\end{equation*}
Strata in $Y_\Upsilon$ are indexed by cones $\omega \in \Upsilon$ with $\p(\omega)=\tau$. By the polyhedral-conical dictionary, the poset of such cones is equal to the poset of polyhedra $P$ in the polyhedral complex $\Upsilon_{\f}$ for $\f \in |\tau|$ an arbitrary interior point. For every such polyhedron $P$ we let $\omega_P \in \Upsilon$ denote the corresponding cone.

The irreducible components of $Y_\Upsilon$ are the maximal strata, hence are indexed by the minimal polyhedra in $\Upsilon_{\f}$: the vertices. Let $V$ denote the set of such vertices and for $v \in V$ let $Y_v$ denote the corresponding irreducible component, so that
\begin{equation*} Y_\Upsilon = \bigcup_{v \in V} Y_v.\end{equation*}
The irreducible component $Y_v$ is the closed stratum of $\Xcal_\Upsilon$ corresponding to the cone
\[ \omega_v \in \Upsilon \]
which is mapped isomorphically onto $\tau$ by $\ptrop$. We also let
\[ \sigma_v \in \Sigma \]
denote the unique minimal cone containing $v$. By definition, this is the minimal cone through which the composition $\omega_v \hookrightarrow \Upsilon \to \Sigma$ factors, where $\Upsilon \to \Sigma$ is the map $\rtrop$ from \eqref{eqn: diagram Upsilon mapping to Sigma and tau}. Let $X_v = X_{\sigma_v} \hookrightarrow X$ denote the corresponding closed stratum. The fibrewise collapsing morphism $\rho \colon \Xcal_{\Upsilon} \to X$ restricts to a collapsing morphism
\[ \rho_v \colon Y_v \to X_v.\]
In simple situations, $\rho_v$ is a toric variety bundle. However, this is not the case in general. It can fail to be flat, or can be flat but with reducible fibres; see Section~\ref{sec: examples}. The principal aim of this paper is to provide a combinatorial recipe to construct $Y_v$ from $X_v$.

\subsection{Tropical position maps} \label{sec: tropical position} For each $v \in V$ there is a linear map
\[ \varphi_v \colon \tau \to \sigma_v\]
recording the position of the vertex $v$ in terms of the tropical parameters. This was introduced in \cite[Definition~1.4]{CN21} in order to describe rubber automorphisms of tropical expansions. Formally it is given by
\[ \varphi_v(\f) = \rtrop(\omega_v \cap \p^{-1}(\f))\]
for $\f \in |\tau|$, where $\rtrop \colon \omega_v \to \sigma_v$ is the restriction of the map in \eqref{eqn: diagram Upsilon mapping to Sigma and tau}.

 The open subdivision $\Upsilon \to \Sigma \times \tau$ restricts to an inclusion $\omega_v \hookrightarrow \sigma_v \times \tau$ corresponding to an inclusion $N_{\omega_v} \hookrightarrow N_{\sigma_v} \times N_\tau$. The following result originally appears in \cite[proof of Theorem~1.5]{CN21}. It is used heavily in Section~\ref{sec: expansion components}.

\begin{proposition} \label{tropical position map gives identification of lattices} The tropical position map $\varphi_v$ produces a natural isomorphism of lattices
\[ (N_{\sigma_v} \times N_\tau)/N_{\omega_v} = N_{\sigma_v}. \]
\end{proposition}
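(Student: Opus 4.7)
The strategy is to exploit combinatorial flatness and reducedness of $\p$ to realise $N_{\omega_v}$ as the graph of an integral linear extension of $\varphi_v$ inside $N_{\sigma_v} \times N_\tau$, and then to split the resulting short exact sequence.

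First I would pin down the dimension of $\omega_v$. Since $v$ is a vertex of the polyhedral complex $\Upsilon_\f$ for $\f$ in the interior of $\tau$, we have $\p(\omega_v)=\tau$, and the dimension formula from the polyhedral--conical dictionary gives $\dim \omega_v = \dim \tau + \dim v = \dim \tau$. Combinatorial flatness of $\p$ ensures the cone-level projection $\omega_v \twoheadrightarrow \tau$ is surjective, and combinatorial reducedness upgrades this to a surjective lattice homomorphism $\p\colon N_{\omega_v} \twoheadrightarrow N_\tau$. Because both lattices have rank $\dim \tau$, this surjection is in fact an isomorphism.

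Next I would invert this isomorphism to produce an integral section $s\colon N_\tau \xrightarrow{\sim} N_{\omega_v} \hookrightarrow N_{\sigma_v} \times N_\tau$. Projecting onto the first factor yields an integral linear map $\widetilde{\varphi}_v \colon N_\tau \to N_{\sigma_v}$ whose graph is precisely $N_{\omega_v}$. On the cone side, the inclusion $\omega_v \hookrightarrow \sigma_v \times \tau$ exhibits $\omega_v$ as the graph of $\widetilde{\varphi}_v$ above $\tau$, and unwinding the definition $\varphi_v(\f) = \rtrop(\omega_v \cap \p^{-1}(\f))$ confirms that $\widetilde{\varphi}_v$ is the linear extension of $\varphi_v$. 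In particular $\widetilde{\varphi}_v(\tau) \subseteq \sigma_v$.

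Finally, with $N_{\omega_v} = \{(\widetilde{\varphi}_v(n),n) : n \in N_\tau\}$ identified as a graph, the map
\[ N_{\sigma_v} \times N_\tau \longrightarrow N_{\sigma_v}, \qquad (a,b) \longmapsto a - \widetilde{\varphi}_v(b), \]
is a surjective lattice homomorphism with kernel exactly $N_{\omega_v}$, giving the desired natural isomorphism $(N_{\sigma_v} \times N_\tau)/N_{\omega_v} \xrightarrow{\sim} N_{\sigma_v}$. The main conceptual step is the passage from rational to integral splitting: combinatorial reducedness is precisely the hypothesis that guarantees $\widetilde{\varphi}_v$ lifts to an integral linear map, without which the quotient could only be identified with $N_{\sigma_v}$ up to finite index. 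Beyond that point the argument is a linear-algebraic splitting of a short exact sequence of lattices.
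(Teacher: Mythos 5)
Your proof is correct and takes essentially the same approach as the paper's: the paper simply exhibits the map $\operatorname{id}_{\sigma_v}\times(-\varphi_v)\colon N_{\sigma_v}\times N_\tau \to N_{\sigma_v}$ and asserts it is surjective with kernel the image of $N_{\omega_v}$. You have supplied the justification for that assertion by using combinatorial flatness and reducedness to show $N_{\omega_v}\to N_\tau$ is an isomorphism, hence $N_{\omega_v}$ is the graph of the integral linear extension $\widetilde{\varphi}_v$, which is exactly why the difference map is surjective with the stated kernel.
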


\begin{proof} Consider the map
\begin{equation} \label{eqn: map giving isomorphism of quotient lattices} \operatorname{Id}_{\sigma_v}\oplus(-\varphi_v) \colon N_{\sigma_v} \times N_\tau \to N_{\sigma_v}.\end{equation}
This is surjective, and we will show that its kernel is the image of $N_{\omega_v} \hookrightarrow N_{\sigma_v} \times N_\tau$. The map $\ptrop \colon \Upsilon \to \tau$ restricts to an isomorphism of cones $\omega_v \to \tau$ giving an isomorphism $N_{\omega_v} \cong N_\tau$. Composing with this isomorphism replaces the inclusion $N_{\omega_v} \hookrightarrow N_{\sigma_v} \times N_\tau$ with the inclusion:
\[ (\varphi_v, \operatorname{Id}_\tau) \colon N_\tau \hookrightarrow N_{\sigma_v} \times N_\tau. \]
The claim now follows from the following short exact sequence:
\[ 0 \to N_\tau \xrightarrow{(\varphi_v,\operatorname{Id}_\tau)} N_{\sigma_v} \times N_\tau \xrightarrow{\operatorname{Id}_{\sigma_v} \oplus (-\varphi_v)} N_{\sigma_v} \to 0.\qedhere \]
\end{proof}

\section{Toric variety bundles and fibrewise GIT} \label{sec: toric variety bundles}

\noindent We develop concrete techniques for constructing and manipulating toric variety bundles. These will be employed in Section~\ref{sec: expansion components} to describe the irreducible components of tropical expansions.

\subsection{Toric variety bundles and the mixing construction} Let $X$ be a base scheme and $Z$ a toric variety. We consider the problem of constructing a toric variety bundle $Y \to X$ with fibre $Z$. 

There are several inequivalent definitions of toric variety bundle in the literature, see e.g. \cite{halic2003families, HuLiuYau, hutoric, BrownToric, OhGIT, KavehManon,HKM}. These arise from different choices of subgroup of $\op{Aut}(Z)$ to contain the transition functions. The following definition is restrictive enough to retain desirable toric structures, but flexible enough that it still leads to an interesting theory.

\begin{definition}\label{def:TVB}
A \textbf{toric variety bundle} over $X$ with fibre $Z$ consists of a diagram
\bcd
P \ar[r,hook] \ar[rd] & Y \ar[d] \\
\, & X
\ecd
where $P \to X$ is a principal $T_Z$-bundle, and which Zariski-locally on $X$ restricts to the diagram
\bcd
X \times T_Z \ar[r,hook] \ar[rd] & X \times Z \ar[d]\\
\, & X.
\ecd
\end{definition}

The following construction appears independently in \cite{SankaranUma} (and perhaps elsewhere).

\begin{construction}[Mixing construction] \label{mixing construction} Consider the following input data:
\begin{enumerate}
\item $X$ an arbitrary base scheme.
\item $\Phi$ a fan in a lattice $N$ (the \textbf{fibre fan}).
\item $P$ a principal $T_{N}$-bundle over $X$ (the \textbf{mixing collection}).	
\end{enumerate}
Denote by $Z$ the toric variety corresponding to the fibre fan. Then the \textbf{mixing bundle} is the following scheme over $X$
\[ Y \colonequals (P \times Z)/T_Z\]
where $T_Z=T_N$ is the dense torus of $Z$ acting antidiagonally on $P \times Z$, so that $(tp,z) \sim (p,tz)$.
\end{construction}

\begin{remark} A mixing collection is equivalently given by any of the following pieces of data:
\begin{enumerate}
\item A principal $T_N$-bundle over $X$.
\item A group homomorphism $L \colon M \to \Pic X$ where $M=\Hom(N,\Z)$.
\item A morphism $X \to \Bcal T_N$.	
\end{enumerate}
Often $N$ will come with a preferred basis, in which case a mixing collection is a list of line bundles indexed by this basis.	
\end{remark}

\begin{lemma} \label{lem: toric bundles and mixing} Defintion~\ref{def:TVB} and Construction~\ref{mixing construction} are equivalent: the mixing construction produces a toric variety bundle, and every toric variety bundle arises from the mixing construction.
\end{lemma}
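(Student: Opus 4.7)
The plan is to prove the two implications separately, in both cases reducing to a Zariski-local computation on a trivialising open of the principal $T_N$-bundle.

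First I would show that the mixing construction produces a toric variety bundle. Let $P \to X$ be a principal $T_N$-bundle and $Z = Z_\Phi$ the toric variety associated to the fibre fan $\Phi$, and set $Y = (P \times Z)/T_Z$ with the diagonal action of $T_Z = T_N$. The map $p \mapsto [(p,1)]$ gives a locally closed embedding $P \hookrightarrow Y$ over $X$, since the relations $[(p \cdot s, s^{-1}z)] = [(p,z)]$ allow us to identify $(P \times T_Z)/T_Z$ with $P$ via $[(p,t)] \mapsto p \cdot t$. Since $T_N$ is a split torus, $P$ is Zariski-locally trivial; on an open $U \subseteq X$ with $P|_U \cong U \times T_Z$ one has
\[ Y|_U \;=\; (U \times T_Z \times Z)/T_Z \;\xrightarrow{\;\cong\;}\; U \times Z, \qquad [(u,t,z)] \longmapsto (u,\, t \cdot z), \]
under which $P|_U \hookrightarrow Y|_U$ becomes $U \times T_Z \hookrightarrow U \times Z$. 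This verifies the defining property.

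For the converse, I would recover the mixing data from a given toric variety bundle. The fibre fan $\Phi$ and the principal $T_Z$-bundle $P$ are part of the input, so the only thing to exhibit is an isomorphism $Y \cong (P \times Z)/T_Z$ over $X$. The key intermediate step is to produce a global fibrewise $T_Z$-action on $Y$ extending the $T_Z$-torsor action on $P$. On a trivialising cover $Y|_U \cong U \times Z$ this action is just the toric action on $Z$; on overlaps the transition function is of the form $(u,z) \mapsto (u,\phi(u)\cdot z)$ for $\phi \colon U \cap U' \to T_Z$, which is $T_Z$-equivariant because $T_Z$ is abelian and left and right translations commute. Therefore the local fibrewise actions glue to a global action $T_Z \times_X Y \to Y$ extending the torsor action on $P$.

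With the fibrewise action in hand, I would define
\[ \phi \colon P \times Z \longrightarrow Y, \qquad (p,z) \longmapsto p \cdot z, \]
which is $T_Z$-invariant for the diagonal action and therefore factors through $(P \times Z)/T_Z \to Y$. To check this is an isomorphism it suffices to trivialise $P$ Zariski-locally over $U \subseteq X$, where it becomes the identification $(U \times T_Z \times Z)/T_Z \xrightarrow{\cong} U \times Z$ of the previous paragraph. Both conclusions now follow.

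The main obstacle is the construction of the global fibrewise $T_Z$-action on $Y$: Definition~\ref{def:TVB} only records the action implicitly through the model diagram, and one has to argue that the natural candidate is preserved under the principal-bundle transition functions. Once commutativity of $T_Z$ is used to secure this compatibility, the rest of the argument is a routine descent computation on a trivialising cover.
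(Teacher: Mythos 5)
Your proof of the forward implication (mixing construction yields a toric variety bundle) is correct and matches the paper's argument: trivialise $P$ Zariski-locally and identify $(U\times T_Z\times Z)/T_Z$ with $U\times Z$.

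In the converse direction there is a gap at the decisive step. You assert that on overlaps the transition function for $Y$ is of the form $(u,z)\mapsto(u,\phi(u)\cdot z)$ for some $\phi\colon U\cap U'\to T_Z$, and then note that commutativity of $T_Z$ makes this $T_Z$-equivariant. But the factorisation of the transition function through $T_Z\leq\Aut(Z)$ is precisely the content of the lemma — Definition~\ref{def:TVB} only guarantees that the transition function for $Y$ is an automorphism of $U_{ij}\times Z$ over $U_{ij}$ which preserves the open subscheme $U_{ij}\times T_Z$ and there restricts to the $T_Z$-torsor transition. A priori it could be a more exotic automorphism of $Z$ away from the open torus. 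The argument needed to close this gap is the density of $T_Z$ in $Z$: since the $Y$-transition function agrees with multiplication by $\phi(u)$ on the dense open $T_Z$, and $Z$ is separated and reduced, they agree on all of $Z$. This is exactly what the paper's proof does, and it is the main idea. Your emphasis on commutativity of $T_Z$ is a subsidiary point (it is used to see that the resulting translations commute with the $T_Z$-action so that your local fibrewise actions glue), but it cannot by itself rule out a non-toric automorphism appearing in the transition. Once the density step is inserted, your route — constructing the global fibrewise $T_Z$-action and the quotient map $P\times Z\to Y$ — becomes a valid, if slightly longer, alternative to the paper's more direct cocycle comparison, which stops as soon as it knows the transitions land in $T_Z$.
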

\begin{proof} Start with a toric variety bundle $P \subseteq Y \to X$ and let $\{ U_i \}_{i \in I}$ be a trivialising cover. On each double overlap $U_{ij}=U_i \cap U_j$ there is a commuting diagram of trivialisations
\begin{equation}\label{eqn: diagram of trivialisations}
\begin{tikzcd}
U_{ij} \times T_Z \ar[r,hook] \ar[d,"\varphi_j^{-1}"] & U_{ij} \times Z \ar[d,"\psi_j^{-1}"] \\
P|_{U_{ij}} \ar[r,hook] \ar[d,"\varphi_i"] & Y|_{U_{ij}} \ar[d,"\psi_i"] \\
U_{ij} \times T_Z \ar[r,hook] & U_{ij} \times Z.	
\end{tikzcd}	
\end{equation}
The transition functions $\varphi_i \circ \varphi_j^{-1}$ and $\psi_i \circ \psi_j^{-1}$ correspond equivalently to morphisms
\[ \varphi_{ij} \colon U_{ij} \to T_Z, \qquad \psi_{ij} \colon U_{ij} \to \operatorname{Aut}(Z).\]
To show that $Y \to X$ arises from the mixing construction, we will show that $\psi_{ij}$ factors through the subgroup $T_Z \leq \Aut (Z)$. The action $T_Z \acts Z$ defines a unique extension of $\varphi_{ij}$ to an automorphism of $Z$. By \eqref{eqn: diagram of trivialisations}, $\varphi_{ij}$ and $\psi_{ij}$ coincide on the dense open $T_Z$ and hence on all of $Z$. Therefore $\psi_{ij}$ factors through $T_Z$ as claimed.

Conversely, let $Y = (P \times Z)/T_Z \to X$ be obtained via the mixing construction. We have
\[ P = (P \times T_Z)/T_Z \]
and so clearly there is an inclusion $P \hookrightarrow Y$ which locally restricts to $X \times T_Z \hookrightarrow X \times Z$.
\end{proof}
	
\subsection{Universal toric variety bundle} \label{sec: universal toric variety bundle} Consider the Artin fan $\Acal_\Phi$. There is a natural isomorphism
\begin{equation} \label{eqn: Artin fan of toric variety} \Acal_\Phi = [Z/T_N] \end{equation}
since both arise as the colimit of quotients $\Acal_\varphi = [U_\varphi/T_\varphi]$ for $\varphi \in \Phi$. For $[Z/T_N]$ this relies on the following fact: given $\varphi \in \Phi$ a cone with lattice $N_{\varphi}$, we can use the lattice inclusion $N_\varphi \subseteq N$ to view $\varphi$ as a cone in $N$. There is a map of cones $(\varphi_{\R},N_\varphi) \to (\varphi_{\R},N)$ which induces the following closed embedding:
\[ U_{\varphi,N_\varphi} \hookrightarrow U_{\varphi,N_\varphi} \times (N/N_\varphi \otimes \Gm) \cong U_{\varphi,N}. \]
This closed embedding becomes an isomorphism after quotienting by dense tori
\[ [U_{\varphi,N_\varphi}/T_{N_\varphi}] = [U_{\varphi,N}/T_{N}]\]
because on the right-hand side the larger torus $T_{N}$ cancels out the additional torus factors in $U_{\varphi,N}$ (in short: Artin cones do not countenance enlargements of the lattice). We can thus identify $\Acal_\varphi$ with the quotient $[U_{\varphi,N}/T_N]$ where $U_{\varphi,N} \hookrightarrow Z$ is the associated affine toric open. This globalises to the identification \eqref{eqn: Artin fan of toric variety}. From this we obtain a map:
\begin{equation} \label{eqn: universal TVB} \Acal_\Phi \to \Bcal T_N. \end{equation}
In the language of Section~\ref{sec: isotropic cone complex}, this arises from the map $\Phi \to N$ of isotropic cone complexes. The following lemma shows that \eqref{eqn: universal TVB} is the \textbf{universal toric variety bundle} with fibre fan $\Phi$.

\begin{lemma} \label{lem: universal toric variety bundle} Fix a fibre fan $\Phi$ and a mixing collection $X \to \Bcal T_N$. Then the associated toric variety bundle $Y \to X$ arises as a fibre product:
\[ 
\begin{tikzcd}
Y \ar[r] \ar[d] \ar[rd,phantom,"\square"] & \Acal_\Phi \ar[d] \\
X \ar[r] & \Bcal T_N.	
\end{tikzcd}
\]
Thus, any pullback of $\Acal_\Phi \to \Bcal T_N$ is a toric variety bundle, and all toric variety bundles with fibre fan $\Phi$ arise in this way.
\end{lemma}

\begin{proof} Temporarily let $F$ denote the fibre product:
\[ F \colonequals X \times_{\Bcal T_N} \Acal_\Phi.\]
We will show $F = Y \colonequals (P \times Z)/T_N$. Given a test scheme $S$, a morphism $S \to F$ consists of: a morphism $g \colon S \to X$, a principal $T_N$-bundle $Q \to S$ with an equivariant morphism $Q \to Z$, and an isomorphism $Q \cong g^\star P$ where $P$ is the principal bundle on $X$ induced by $X \to \Bcal T_N$. This data is equivalent to: a morphism $g \colon S \to X$, a principal $T_N$-bundle $Q \to S$ and an equivariant morphism
\[ Q \to g^\star P \times Z \]
where $T_N$ acts antidiagonally on the target. This shows $Y= (P \times Z)/T_N$ as claimed.
\end{proof}

\subsection{Identifying the mixing collection} \label{sec: identifying mixing collection} Consider a toric variety bundle $P \subseteq Y \to X$, which by Lemma~\ref{lem: toric bundles and mixing} we may express as a quotient
\[ Y = (P \times Z)/T_N \]
with $T_N \subseteq Z$ the fibre toric variety. Since $T_N \curvearrowright Z$ preserves the toric strata, there are well-defined fibrewise toric strata in $Y$ which map surjectively onto $X$. This is one of the major advantages of Definition~\ref{def:TVB}, and will play a crucial role in the discussion of tropical expansions.

Let $\Phi$ be the fibre fan corresponding to $Z$ and let $\Phi(1)$ denote the set of rays. For $\rho \in \Phi(1)$ we let $D_\rho \subseteq Z$ denote the corresponding toric Weil divisor and
\[ \Dcal_\rho = (P \times D_\rho)/T_N \subseteq (P \times Z)/T_N = Y \]
the horizontal divisor in the toric variety bundle. Let $M$ be the character lattice of $T_N$. Recall that for each $m \in M$ there is a relation
\[ \sum_{\rho \in \Phi(1)} \langle m, v_\rho \rangle D_\rho = 0 \]
in $\Cl(Z)$, where $v_\rho \in N$ is the primitive lattice generator of the ray. In the bundle $Y$, the analogous relations hold only up to line bundles pulled back from the base. The following lemma shows that these line bundles are equivalent to the data of the mixing collection.

\begin{lemma} \label{lem: identifying mixing collection} Let $\pi \colon Y \to X$ be a toric variety bundle obtained via the mixing construction (Construction~\ref{mixing construction}), with fibre fan $\Phi$ and mixing collection encoded by a homomorphism
\[ L \colon M \to \Pic X.\]
Then for each $m \in M$ we have the following relation in $\Cl Y$
\begin{equation} \label{eqn: divisor relation in toric bundle} \sum_{\rho \in \Phi(1)} \langle m,v_\rho \rangle \Dcal_\rho = \pi^\star L(m).\end{equation}
\end{lemma}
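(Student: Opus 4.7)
The plan is to lift the standard toric divisor identity $\op{div}(\chi^m) = \sum_\rho \langle m, v_\rho \rangle D_\rho$ from the fibre $Z$ to the total space $Y$ via the mixing quotient $Y = (P \times Z)/T_Z$. The entire proof takes place in four steps, organised around the analysis of $\chi^m$ as a $T_Z$-semi-invariant rational function on $P \times Z$.

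First I would pull the character $\chi^m$ back along the second projection $p_2 \colon P \times Z \to Z$. Its divisor on $P \times Z$ is $\sum_\rho \langle m, v_\rho \rangle (P \times D_\rho)$. Each $P \times D_\rho$ is $T_Z$-stable (being the pullback of a $T_Z$-stable divisor under a $T_Z$-equivariant map), and by construction it descends under the quotient $q \colon P \times Z \to Y$ to precisely the horizontal divisor $\Dcal_\rho$.

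Next I would observe that the function $\chi^m$ itself is not $T_Z$-invariant: under the diagonal action $t \cdot (p,z) = (pt, tz)$ one computes $t^\star(p_2^\star \chi^m) = \chi^m(t) \cdot p_2^\star \chi^m$, so $\chi^m$ is semi-invariant of weight $m$. It is therefore not a rational function on $Y$ but a rational section of a line bundle $\Lcal_m$ on $Y$, namely the descent of the trivial line bundle on $P \times Z$ with $T_Z$-action twisted by $\chi^{-m}$. The divisor of this section equals $\sum_\rho \langle m, v_\rho\rangle \Dcal_\rho$.

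The step I expect to be the heart of the argument is identifying $\Lcal_m$ with $\pi^\star L(m)$. Since the twisted trivial bundle on $P \times Z$ is pulled back along the first projection from the analogous twisted trivial bundle on $P$, its descent factors through the pullback along $\pi$ of a line bundle on $X = P/T_Z$. Under the dictionary recalled after Construction~\ref{mixing construction} --- equivalently identifying the mixing collection with the homomorphism $L \colon M \to \Pic X$, a morphism $X \to \Bcal T_N$, or the collection of line bundles $P \times^{T_Z} \C_m$ --- this descended bundle is exactly $L(m)$ (up to the sign convention fixed in Section~\ref{sec: toric variety bundles}). Combining the three steps yields the equality
\[ \sum_{\rho \in \Phi(1)} \langle m, v_\rho \rangle \Dcal_\rho = \pi^\star L(m) \qquad \text{in } \Cl Y, \]
as desired. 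The bookkeeping challenge is purely notational: matching the convention chosen for $L$ against the weight sign in the $T_Z$-semi-invariance of $\chi^m$; conceptually, the identity is forced by the fibrewise toric relation and the definition of the mixing bundle.
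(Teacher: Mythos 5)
Your proposal is correct, but takes a more hands-on route than the paper's. The paper exploits the identification $M = \Cl(\mathcal{B}T_Z)$ and the (cartesian) diagram relating $Y = [(P\times Z)/T_Z]$, $[Z/T_Z]$, and $[P/T_Z]=X$ over $\mathcal{B}T_Z$: pulling the class $m \in \Cl(\mathcal{B}T_Z)$ back to $Y$ through $[Z/T_Z]$ yields the equivariant divisor class $\sum_\rho \langle m, v_\rho\rangle \mathcal{D}_\rho$, while pulling it back through $X$ yields $\pi^\star L(m)$, and commutativity finishes the proof. This is a short functorial argument with no loose signs to chase. Your proof unpacks the same underlying geometry by hand: you treat $p_2^\star\chi^m$ as a $T_Z$-semi-invariant rational function of weight $m$ on $P\times Z$, descend it to a rational section of a line bundle $\mathcal{L}_m$ on $Y$, compute its divisor as $\sum_\rho \langle m, v_\rho\rangle \mathcal{D}_\rho$, and then identify $\mathcal{L}_m$ with $\pi^\star L(m)$ via the associated-bundle construction $P\times^{T_Z}\mathbb{C}_m$. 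Your argument is elementary and self-contained, and would be readable by someone unfamiliar with quotient stacks; the cost is exactly the sign/convention bookkeeping you flag when matching the weight of the semi-invariance against the convention defining $L$. The paper's argument is more economical and fits the Artin-fan formalism that runs through the rest of the text; both buy the same content.
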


\begin{proof} Note that $M=\Cl_{T_N}(\pt) = \Cl(\Bcal T_N)$. Consider the cartesian diagram from Lemma~\ref{lem: universal toric variety bundle}:
\[
\begin{tikzcd}
Y \ar[d,"\pi"] \ar[r] \ar[rd,phantom,"\square"] & {[Z/T_N]} \ar[d] \\
X \ar[r] & \Bcal T_N.
\end{tikzcd}
\]
Fix $m \in M=\Cl(\Bcal T_N)$. Pulling back along the two routes gives the two sides of \eqref{eqn: divisor relation in toric bundle}.
\end{proof}

\subsection{Fibrewise GIT} The results of this section are not used elsewhere in the paper. They are included as they provide concrete techniques for manipulating toric variety bundles.

\subsubsection{GIT construction} For this section we assume that the fibre fan $\Phi$ is simplicial and not contained in any proper linear subspace; equivalently, the associated toric variety $Z$ is $\Q$-factorial and contains no torus factors. In this context, it is well-known that $Z$ arises as a GIT quotient
\[ Z = \Aaff^{\!\Phi(1)} \sslash_\theta G \]
where $G=\Hom(\Cl Z, \Gm)$ is a finite extension of an algebraic torus and $\Phi(1)$ is the set of rays \cite{CoxHomogeneous} (see also \cite[Chapters 5 and 14]{CLS}). The exact sequence
\begin{equation} \label{eqn: short exact sequence class group} 0 \to M \to \ZZ^{\Phi(1)} \to \Cl Z \to 0 \end{equation}
dualises, since $\Gm$ is divisible, to an exact sequence
\begin{equation} \label{eqn: GIT exact sequence tori} 0 \to G \to \Gm^{\Phi(1)} \to T_Z \to 0. \end{equation}
The inclusion $G \hookrightarrow \Gm^{\Phi(1)}$ defines an action $G \curvearrowright \Aaff^{\!\Phi(1)}$ of which $Z$ is the GIT quotient. The unstable locus
\[ B(\Phi) \subseteq \Aaff^{\!\Phi(1)}\]
is a union of coordinate subspaces, corresponding to collections of toric divisors in $Z$ with empty intersection. It is induced by a character $\theta$ of $G$ which we fix once and for all (see e.g. \cite[Section~4.2]{CoatesIritaniJiang} for more details).

The GIT construction extends to the relative setting \emph{mutatis mutandis}, as we now explain. This has already appeared at various points in the literature, see e.g. \cite{halic2003families, hutoric,BrownToric, OhGIT}. 

\begin{construction}[Fibrewise GIT] \label{construction: fibrewise GIT} Fix a fan $\Phi$ whose associated toric variety $Z$ has a GIT quotient presentation, as above. Let $X$ be an arbitrary base scheme and choose a homomorphism
\[ K \colon \ZZ^{\Phi(1)} \to \Pic X.\]
The inclusion $G \hookrightarrow \Gm^{\Phi(1)}$ produces an action of $G$ on the total space of the vector bundle
\[ E \colonequals \bigoplus_{\rho \in \Phi(1)} K(e_\rho).\]
The character $\theta$ of $G$ produces a lifted action on the total space of the trivial line bundle over $E$
\begin{align*} G & \curvearrowright  E \times \Aaff^{\! 1}\\
g(y,z) & = (g\cdot y, \theta(g)\cdot z).
\end{align*}
We let $\OO_E(\theta)$ denote this equivariant line bundle. The \textbf{fibrewise GIT quotient} is then defined as 
\[ E \sslash_{\theta} G \colonequals \Proj_{\OO_X} \bigoplus_{k \geq 0} (\pi_\star \OO_E(k \theta))^G\]
where $\pi \colon E \to X$. Since the character $\theta$ is fixed throughout, we suppress it from the notation. 
\end{construction}

The following Theorems~\ref{thm: GIT implies TVB} and \ref{thm: TVB implies GIT} together establish an equivalence between the mixing construction (Construction~\ref{mixing construction}) and the fibrewise GIT construction (Construction~\ref{construction: fibrewise GIT}).
\begin{theorem} \label{thm: GIT implies TVB} Every fibrewise GIT quotient is a toric variety bundle in the sense of Definition~\ref{def:TVB}.
\end{theorem}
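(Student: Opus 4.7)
The plan is to verify locally on $X$ that the fibrewise GIT quotient $Y \colonequals E \sslash_\theta G$ matches the local product form demanded by Definition~\ref{def:TVB}. First I would choose a Zariski open cover $\{U_i\}$ of $X$ which simultaneously trivialises every line bundle $K(e_\rho)$ for $\rho \in \Phi(1)$; this exists since $E$ is a finite direct sum of line bundles. Over such a $U_i$ one obtains $E|_{U_i} \cong U_i \times \mathbb{A}^{\Phi(1)}$, and this identification is automatically $G$-equivariant because the $G$-action comes from the fixed embedding $G \hookrightarrow \Gm^{\Phi(1)}$ scaling the $\Phi(1)$-coordinates, which is intrinsic to each fibre. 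Using that passage to $G$-invariants and relative Proj both commute with restriction to an open, this produces
$$Y|_{U_i} \;\cong\; U_i \times \bigl(\mathbb{A}^{\Phi(1)} \sslash_\theta G\bigr) \;=\; U_i \times Z.$$

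Next I would analyse the transition data. On an overlap $U_{ij}$ the change-of-trivialisation for $E$ is encoded by a morphism $U_{ij} \to \Gm^{\Phi(1)}$ acting fibrewise by diagonal rescaling. This action commutes with the embedded $G$-action and hence descends to a fibrewise automorphism of $Z$; by the exact sequence \eqref{eqn: GIT exact sequence tori} the resulting action factors through $T_Z = \Gm^{\Phi(1)}/G$, acting on $Z$ in the standard toric way. Combined with the local product form above, this is exactly the content of Definition~\ref{def:TVB}. The required principal $T_Z$-bundle $P \hookrightarrow Y$ is then the fibrewise complement of the toric boundary, locally $U_i \times T_Z \hookrightarrow U_i \times Z$; since the transition functions take values in $T_Z$ acting freely on itself, $P \to X$ is genuinely principal.

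The only step requiring any care is the compatibility of fibrewise GIT with open restriction, which amounts to flat base change for $\pi_\star$ together with the fact that $G$-invariants commute with flat pullback; both are standard. As a byproduct the argument identifies the mixing collection (in the sense of Construction~\ref{mixing construction}) as the composition $M \hookrightarrow \ZZ^{\Phi(1)} \xrightarrow{K} \Pic X$, where the first arrow is the canonical inclusion dual to $\Gm^{\Phi(1)} \twoheadrightarrow T_Z$; this anticipates the converse Theorem~\ref{thm: TVB implies GIT}.
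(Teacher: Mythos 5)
Your argument is correct and is essentially the paper's proof: both work Zariski-locally on a trivialising cover for $E$, use that fibrewise GIT commutes with open restriction to get $Y|_{U_i} \cong U_i \times Z$, identify $P$ as the locus over the big torus (the paper writes this as $E^\circ \sslash G$ with $E^\circ$ the complement of the coordinate subbundles), and use the exact sequence $0 \to G \to \Gm^{\Phi(1)} \to T_Z \to 0$ to see that the transition functions descend to $T_Z$. The only difference is presentational — you make the transition-function analysis and the flat-base-change justification a bit more explicit than the paper does, and both reach the same identification of the induced mixing collection $M \hookrightarrow \ZZ^{\Phi(1)} \xrightarrow{K} \Pic X$.
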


\begin{proof} Fix a fibrewise GIT quotient $Y=E \sslash G$ as in Construction~\ref{construction: fibrewise GIT}. Let $E^\circ \subseteq E$ be the complement of all the coordinate subbundles. We claim that
\[ P \colonequals E^\circ \sslash  G \hookrightarrow E \sslash  G = Y\]
is the inclusion of a principal $T_Z$-bundle which locally restricts to $X \times T_Z \hookrightarrow X \times Z$. Over a trivialising open set $U \subseteq X$ for $E$ we have
\[ E^\circ|_U \cong U \times \Gm^{\Phi(1)} \subseteq U \times \Aaff^{\!\Phi(1)} \cong E|_U.\]
Note that $E^\circ$ is disjoint from the unstable locus in $E$, and moreover the action $G \acts E$ restricts to a free action $G \acts E^\circ$. Hence we obtain
\[  (E^\circ \sslash  G)|_U = U \times (\Gm^{\Phi(1)} / G) = U \times T_Z\]
where the last equality holds by \eqref{eqn: GIT exact sequence tori}. This produces the desired inclusion $P\colonequals E^\circ \sslash  G  \hookrightarrow Y$ which clearly restricts to $U \times T_Z \hookrightarrow U \times Z$ locally, as required. We note that $P$ is the principal $T_Z$-bundle corresponding to the composite
\[  M \hookrightarrow \ZZ^{\Phi(1)} \xrightarrow{K} \Pic X. \qedhere\] \end{proof}

\begin{theorem} \label{thm: TVB implies GIT} Consider a toric variety bundle $Y \to X$ with mixing collection encoded in a homomorphism $L \colon M \to \Pic X$. Choose a homomorphism
\[ K \colon \Z^{\Phi(1)} \to \Pic X \]
which restricts to $L$ under the natural inclusion $M \hookrightarrow \ZZ^{\Phi(1)}$. Then $Y$ is equal to the fibrewise GIT quotient associated to $K$.

In particular, if $K_1,K_2 \colon \Z^{\Phi(1)} \to \Pic X$ restrict to the same homomorphism $L \colon M \to \Pic X$ then the fibrewise GIT quotients associated to $K_1$ and $K_2$ are isomorphic.
\end{theorem}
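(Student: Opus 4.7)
The plan is to deduce this from Theorem~\ref{thm: GIT implies TVB} together with the uniqueness half of Lemma~\ref{lem: toric bundles and mixing}. The central point is that a toric variety bundle with prescribed fibre fan is determined up to canonical isomorphism by its mixing collection, and both the given $Y$ and the fibrewise GIT quotient built from $K$ have mixing collection $L$.

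First I would apply Theorem~\ref{thm: GIT implies TVB} to the fibrewise GIT quotient $E\sslash G$ constructed from $K$. The proof of that theorem exhibits the principal $T_Z$-bundle inside $E\sslash G$ as $P' = E^\circ \sslash G$ and identifies it explicitly as the principal bundle classified by the composite
\[ M \hookrightarrow \Z^{\Phi(1)} \xrightarrow{K} \Pic X. \]
By hypothesis this composite equals $L$, so $P'$ is canonically isomorphic to the principal $T_Z$-bundle $P_L$ classified by $L$.

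Next I would invoke the construction in the proof of Lemma~\ref{lem: toric bundles and mixing}: the given toric variety bundle $Y$ also decomposes as a mixing bundle $Y = (P_Y \times Z)/T_Z$ for its distinguished principal subbundle $P_Y \hookrightarrow Y$, and by assumption $P_Y$ is the bundle classified by $L$, so $P_Y \cong P_L$. Since both $Y$ and $E\sslash G$ then arise from the mixing construction applied to the same input $(P_L,\Phi)$, we obtain canonical isomorphisms over $X$
\[ Y \;\cong\; (P_L \times Z)/T_Z \;\cong\; E \sslash G, \]
proving the first assertion. For the final statement, both $K_1$ and $K_2$ restrict to $L$, so the first part applies to each and yields canonical isomorphisms of the two fibrewise GIT quotients with $(P_L \times Z)/T_Z$, hence with each other.

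The only nontrivial ingredient is the identification of $P' = E^\circ\sslash G$ as the principal bundle classified by $L$, which is where the ambiguity in extending $L$ to $K$ must wash out; this is already handled in the proof of Theorem~\ref{thm: GIT implies TVB} via the short exact sequence $0 \to G \to \Gm^{\Phi(1)} \to T_Z \to 0$, after which only the restriction of $K$ to $M$ survives. The remainder of the argument is a formal assembly of these observations, so I anticipate no further obstacles.
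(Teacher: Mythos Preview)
Your proposal is correct and follows essentially the same route as the paper: both arguments identify the principal $T_Z$-bundle inside $E\sslash G$ as the bundle classified by $L$ (the paper does this by tracking transition functions through the surjection $\Gm^{\Phi(1)}\to T_Z$, you by citing the remark at the end of the proof of Theorem~\ref{thm: GIT implies TVB}), and then conclude by the uniqueness of the mixing construction from Lemma~\ref{lem: toric bundles and mixing}. The only cosmetic difference is that the paper phrases the matching of principal bundles in terms of explicit transition functions rather than classifying maps.
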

\begin{proof} Let $E=\oplus_{\rho \in \Phi(1)} K(e_\rho)$. On a trivialising open set we have $E|_U = U \times \Aaff^{\!\Phi(1)}$ and the fibrewise GIT construction gives
\[ (E \sslash  G)|_U = U \times Z. \]
The transition functions for $E$ take values in $\Gm^{\Phi(1)}$ and hence the transition functions for $E \sslash  G$ take values in 
\[ \Gm^{\Phi(1)}/G = T_Z.\]
Restricting to the principal bundle $P \subseteq Y$, the fact that $K$ restricts to $L$ implies that the transition functions for $P$ and for $E \sslash  G$ coincide. Hence $Y=E \sslash  G$ as claimed.
\end{proof}

\begin{remark} The fact that the fibrewise GIT quotient only depends on the restriction $K|_M$ generalises the well-known fact that
\[ \PP_X(E \otimes A) \cong \PP_X(E) \]
for $E$ a vector bundle and $A$ a line bundle.	
\end{remark}

\subsubsection{Fibrewise line bundles and homogeneous coordinates} For this section, we assume that the fibre fan $\Phi$ is smooth and not contained in any proper linear subspace; equivalently, the associated toric variety $Z$ is smooth and contains no torus factors. We show that the choice of extension
\[ K \colon \Z^{\Phi(1)} \to \Pic X \]
of the mixing collection not only endows $Y$ with the structure of a GIT quotient, but equips it with special line bundles and relative homogeneous coordinates. These generalise the fibrewise $\OO(1)$ and relative homogeneous coordinates for projective bundles.

\begin{construction}[Fibrewise bundles] Fix input data $(X,\Phi,L)$ as in Construction~\ref{mixing construction} and choose an extension $K$ of $L$ as in the fibrewise GIT construction (Construction~\ref{construction: fibrewise GIT}). Consider the variety:
\begin{equation} \label{eqn: bundle V GIT construction} V \colonequals \bigoplus_{\rho \in \Phi(1)} K(e_\rho) \setminus B(\Phi). \end{equation}
Since $\Phi$ is smooth, $G$ acts freely on $V$ and we have $Y = [V/G]$. We thus obtain a morphism:
\[ Y \to \Bcal G. \]
Moreover, again since $\Phi$ is smooth, $G$ is an algebraic torus with character lattice equal to $\Cl Z = \Pic Z$. The above morphism is thus equivalent to the data of a homomorphism:
\[ \OO_K \colon \Pic Z \to \Pic Y. \]
We refer to the $\OO_K(A)$ for $A \in \Pic Z$ as the \textbf{fibrewise bundles} associated to the extension $K$ of $L$. 
\end{construction}

\begin{lemma} Consider two homomorphisms $K_1,K_2 \colon \Z^{\Phi(1)} \to \Pic X$ which restrict to the same mixing collection $L \colon M \to \Pic X$, and thus produce the same toric variety bundle $Y$ by Theorem~\ref{thm: TVB implies GIT}.

The short exact sequence \eqref{eqn: short exact sequence class group} implies that $K_1 \otimes K_2^{-1}$ descends uniquely to a homomorphism:
\[ R \colon \Pic Z \to \Pic X. \]
Then for $A \in \Pic Z$ we have:
\[ \OO_{K_1}(A) \cong \OO_{K_2}(A) \otimes p^\star R(A).\]	
\end{lemma}

\begin{proof} For $i \in \{1,2\}$ the morphism $\OO_{K_i}$ is equivalent to the data of the principal $G$-bundle
\[ V_i \to Y = [V_i/G] \]
where the prequotient $V_i$ is given as in \eqref{eqn: bundle V GIT construction} by:
\[ V_i \colonequals \bigoplus_{\rho \in \Phi(1)} K_i(e_\rho) \setminus B(\Phi). \]
It is equivalent to establish an isomorphism $V_1 \cong V_2 \otimes p^\star R$ of principal $G$-bundles over $Y$.

Each $V_i$ is itself a toric variety bundle, as we now describe. Consider the subfan $\smash{\widetilde{\Phi}}$ of the positive orthant in $\smash{\Z^{\Phi(1)}}$ consisting of faces whose associated set of rays is contained in a cone of $\Phi$. Then $V_i$ is the toric variety bundle over $X$ with fibre fan $\smash{\widetilde{\Phi}}$ and mixing collection $K_i$.

We are given an isomorphism of principal $\smash{\Gm^{\Phi(1)}}$-bundles over $X$
\[ K_1 \cong K_2 \otimes R \]
where $R$ is viewed as a principal $\Gm^{\Phi(1)}$-bundle using the inclusion $G \to \Gm^{\Phi(1)}$. This produces an isomorphism between the associated toric variety bundles with fibre fan $\smash{\widetilde{\Phi}}$, and the isomorphism commutes with the projections to $Y$ because $K_1/G \cong L \cong K_2/G$, and $R/G$ is trivial.

Denote by $V_2^\prime$ the toric variety bundle associated to $(\widetilde{\Phi},K_2 \otimes R)$, viewed as a principal $G$-bundle over $Y$. By the above we have $V_1 \cong V_2^\prime$; we now compare $V_2^\prime$ to $V_2$. Fix an open cover of $X$ which trivialises $K_2$ and $R$ (and thus $Y$). This pulls back to an open cover of $Y$ on which the map $V_2^\prime \to Y$ takes the form:
\[ U \times Z_{\widetilde{\Phi}} \to U \times Z_{\Phi}.\]
Examining the transition functions, we obtain an isomorphism of principal $G$-bundles over $Y$:
\[ V_2^\prime \cong V_2 \otimes p^\star R. \]
Since $V_1 \cong V_2^\prime$ this completes the proof.
\end{proof}

Finally, we show that in the smooth case the fibrewise GIT quotient admits homogeneous coordinates over the base, as in \cite{CoxFunctor}.
\begin{lemma} Fix a base scheme $X$, a fibre fan $\Phi$, and a homomorphism
\[ K \colon \Z^{\Phi(1)} \to \Pic X\]
with restriction $L \colon M \to \Pic X$. Let $Y = E \sslash G$ denote the fibrewise GIT quotient of Construction~\ref{construction: fibrewise GIT}. Suppose that $\Phi$ is smooth and let $S$ be an arbitrary test scheme. Then a morphism $S \to Y$ is equivalent to the following data:
\begin{enumerate}
	\item a morphism $f \colon S \to X$;
	\item for each $\rho \in \Phi(1)$ a line bundle and section $(A_\rho,u_\rho)$ on $S$;
	\item an isomorphism
	\[ \bigotimes_{\rho \in \Phi(1)} A_\rho^{\otimes \langle m,v_\rho \rangle} \cong f^\star L(m)\] 
	for each $m \in M$, compatible with the group structure on $M$.
\end{enumerate}
The tuple of sections $(u_\rho)_{\rho \in \Phi(1)}$ must avoid the unstable locus $B(\Phi) \subseteq \bigoplus_{\rho \in \Phi(1)} A_\rho$.	
\end{lemma}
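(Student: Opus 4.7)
The plan is to combine Theorem~\ref{thm: GIT implies TVB} with a stack-theoretic reinterpretation of $Y$. Since $\Phi$ is smooth, $Z$ is smooth, $\Cl(Z)$ is free, and $G = \Hom(\Cl(Z), \Gm)$ is an algebraic torus with character lattice $\Cl(Z)$. Moreover $G$ acts freely on the semistable locus $E^{\op{ss}} \colonequals E \setminus B(\Phi)$, so the fibrewise GIT quotient coincides with the quotient stack $Y = [E^{\op{ss}}/G]$. By the universal property of quotient stacks, a morphism $S \to Y$ is equivalent to a $G$-torsor $q \colon P \to S$ together with a $G$-equivariant morphism $\varphi \colon P \to E^{\op{ss}}$ over $X$. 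The composite $P \to E \to X$ factors uniquely through $q$ since $G$ acts on fibres of $E \to X$, and this factorisation produces the map $f \colon S \to X$ of item~(1).

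Next I would unpack the $G$-torsor. A $G$-torsor on $S$ is equivalent to a group homomorphism $\Cl(Z) \to \Pic S$, sending each character $\alpha$ to the associated line bundle $P \times^G \C_\alpha$. Setting $A_\rho \colonequals (P \times^G \C_{[D_\rho]}) \otimes f^\star K(e_\rho)$ for each ray, and applying the torsor homomorphism to the relation $\sum_\rho \langle m, v_\rho \rangle [D_\rho] = 0$ in $\Cl(Z)$ (which holds for $m \in M$ by the defining exact sequence) would yield the isomorphism
\begin{equation*} \bigotimes_{\rho \in \Phi(1)} A_\rho^{\otimes \langle m, v_\rho \rangle} \cong f^\star L(m) \end{equation*}
compatibly with the group structure on $M$. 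This accounts for items~(2) and~(3).

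Third, I would translate the $G$-equivariant morphism $\varphi$ into the tuple of sections $(u_\rho)$. Working locally on $S$ where both $P$ and $f^\star E$ trivialise, $\varphi$ becomes a map $S \to \Aaff^{\Phi(1)}$ given by a tuple of functions $(u_\rho)$. Globally, the transition functions of $P$ (acting via the characters $\chi_\rho$ of $G$) and those of $f^\star E$ (valued in $\Gm^{\Phi(1)}$) combine to show that each $u_\rho$ is naturally a section of $A_\rho$. The condition that $\varphi$ lands in $E^{\op{ss}}$ is fibrewise and $\Gm^{\Phi(1)}$-invariant, so it descends to the stated condition that the tuple $(u_\rho)$ avoids $B(\Phi) \subseteq \bigoplus_\rho A_\rho$.

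For the converse, the data $(f, (A_\rho, u_\rho), \cong)$ canonically determines a homomorphism $\Cl(Z) \to \Pic S$ via $[D_\rho] \mapsto A_\rho \otimes f^\star K(e_\rho)^\vee$, well-defined thanks to the isomorphisms in (3); this recovers $P$, and the sections $u_\rho$ reassemble into $\varphi$. The main obstacle I anticipate is bookkeeping the various twists: ensuring that the $\chi_\rho$-character twist and the $f^\star K(e_\rho)$-twist compose in the correct order to produce $A_\rho$, and verifying that the two constructions are mutually inverse equivalences of groupoids functorial in $S$. Because all these verifications are local on $S$, they reduce to the classical Cox functor-of-points description of morphisms into a smooth toric variety, of which the statement is a parametric generalisation.
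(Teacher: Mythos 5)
Your proof is correct, but it takes a somewhat different route from the paper's. The paper's own argument starts from the cartesian square $Y = X \times_{\Bcal T_Z} [Z/T_Z]$ (established in Lemma~\ref{lem: identifying mixing collection}), rewrites $[Z/T_Z]$ via the Cox presentation as $[(\Aaff^{\Phi(1)} \setminus B(\Phi))/\Gm^{\Phi(1)}]$, and then reads the three items directly off the fibre product: the map to $X$, the $\Gm^{\Phi(1)}$-torsor with sections (which is the classical Cox functor of points), and the compatibility isomorphism over $\Bcal T_Z$, which is precisely item~(3). You instead take $Y = [E^{\op{ss}}/G]$ as your starting point, unpack the $G$-torsor into a homomorphism $\Cl Z \to \Pic S$, and twist by $f^\star K(e_\rho)$ to produce the $A_\rho$. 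The computations you sketch (the character $\chi_\rho$ identified with $[D_\rho]$, the relation $\sum_\rho \langle m, v_\rho\rangle[D_\rho] = 0$ killing the torsor contribution and leaving $f^\star L(m)$) are all correct, and the construction does assemble into a two-sided equivalence. What the paper's route buys is that all the twist bookkeeping you flag as the "main obstacle" is absorbed into a single fibre-product diagram, so nothing needs to be checked locally on $S$; your route is more hands-on and makes the role of the Cox exact sequence $0 \to M \to \Z^{\Phi(1)} \to \Cl Z \to 0$ explicit, but at the cost of having to verify that the $G$-twist and the $K$-twist compose correctly and that the two directions are mutually inverse. One small point worth making explicit in your version: the identification $E \sslash G = [E^{\op{ss}}/G]$ over $X$ requires the smoothness of $\Phi$ (so that $G$ acts freely on the fibrewise stable locus and the GIT and stack quotients agree fibrewise, hence globally); you assert this but it is the place where the smoothness hypothesis enters, exactly as in the paper's equation~\eqref{eqn: GIT quotient is stack quotient}.
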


\begin{proof} As in the proof of Lemma~\ref{lem: identifying mixing collection} there is a cartesian square
\begin{equation}
\begin{tikzcd} \label{eqn: fibre product functor of points GIT}
Y \ar[r] \ar[d] \ar[rd,phantom,"\square"] & {[Z/T_Z]} \ar[d] \\
X \ar[r] & \Bcal T_Z.
\end{tikzcd}
\end{equation}
Since $\Phi$ is smooth, the GIT quotient coincides with the stack quotient:
\begin{equation} \label{eqn: GIT quotient is stack quotient} Z = [(\Aaff^{\Phi(1)} \setminus B(\Phi)) / G]. \end{equation}
Together with \eqref{eqn: GIT exact sequence tori}, this gives
\[ [Z/T_Z] = [(\Aaff^{\Phi(1)} \setminus B(\Phi)) / \Gm^{\Phi(1)}].\]
The result now follows directly from \eqref{eqn: fibre product functor of points GIT}. Note that the morphism $X \to \Bcal T_Z$ precisely encodes the mixing collection $L$.
\end{proof}

\section{Expansion components}\label{sec: expansion components}

\subsection{Setup} Fix a toroidal embedding $(X|D)$ with tropicalisation $\Sigma=\Sigma(X|D)$. Let $\tau$ be a cone and consider a combinatorial tropical expansion
\bcd
\Upsilon \ar[r,"\rtrop"] \ar[d,"\p"] & \Sigma \\
\tau 
\ecd 
with associated geometric tropical expansion
\bcd
\Xcal_\Upsilon \ar[r,"\rho"] \ar[d,"\pi"] & X \\
U_\tau
\ecd
where $U_\tau = \Spec \kfield[S_\tau]$ is the corresponding affine toric variety. Let $0 \in U_\tau$ be the torus-fixed point. As discussed in Section~\ref{sec: central fibre}, the irreducible components of the central fibre $Y_\Upsilon=\pi^{-1}(0)$ are indexed by vertices $v$ in the corresponding polyhedral subdivision of $\Sigma$,
\[ Y_\Upsilon = \bigcup_{v \in V} Y_v.\]
By the polyhedral-conical dictionary, a vertex $v$ corresponds to a cone $\omega_v \in \Upsilon$ mapped isomorphically onto $\tau$ by $\p$, and $Y_v \hookrightarrow \Xcal_\Upsilon$ is the closed stratum corresponding to $\omega_v$.

Let $\sigma_v \in \Sigma$ be the minimal cone containing $\rtrop(\omega_v)$ and let $X_v \hookrightarrow X$ be the closed stratum corresponding to $\sigma_v$. The morphism $\rho$ restricts to a collapsing morphism
\[ \rho_v \colon Y_v \to X_v. \]
Since the pair $(X|D)$ is arbitrary the stratum $X_v$ is equally arbitrary, and nothing interesting can be said about it. Instead, our goal is to describe the relative geometry of the morphism $\rho_v$. As we will see, this is controlled entirely by the polyhedral combinatorics.

\subsection{Cautionary examples} \label{sec: examples} The naive expectation is that $\rho_v$ should be a toric variety bundle. However, this is false: it may have reducible fibres (Example~\ref{example: flat but reducible fibre}) or even fail to be flat (Example~\ref{example: not flat}). This precludes a simple description of $\rho_v$. In the following sections we study a sequence of open subschemes
\[ Y_v^\circ \hookrightarrow Y_v^\bullet \hookrightarrow Y_v \]
and proceed to describe the relative geometry of each over the base. For $Y_v^\circ$ and $Y_v^\bullet$ we obtain complete descriptions (Sections~\ref{sec: Yvcirc} and \ref{sec: Yvbullet}). For $Y_v$ we obtain a combinatorial criterion for $\rho_v$ to be a toric variety bundle; when this criterion is satisfied, we give a combinatorial recipe to construct $Y_v$ from $X_v$ (Section~\ref{sec: Yv closed stratum}). If the stratum $X_v \hookrightarrow X$ is minimal then $Y_v^\bullet=Y_v$ and so the results of Section~\ref{sec: Yvbullet} suffice.

\begin{example}\label{example: flat but reducible fibre}
Take $X$ a smooth variety and $D=D_1+D_2$ the union of two smooth divisors with nonempty connected intersection. We have $\Sigma=\Sigma(X|D)=\RR^2_{\geq 0}$. Consider the following tropical expansion over the base cone $\tau=\RR_{\geq 0}$ with coordinate $e$.

\begin{figure}[H]
	\centering
		\centering
		\begin{tikzpicture}[scale=.7]
		\tikzstyle{every node}=[font=\normalsize]
		\tikzset{arrow/.style={latex-latex}}
		
		\tikzset{cross/.style={cross out, draw, thick,
         minimum size=2*(#1-\pgflinewidth), 
         inner sep=1.2pt, outer sep=1.2pt}}

\coordinate (O) at (0,0,0);
	 \coordinate (B) at (6,0,0);
	  \coordinate (Bhalf) at (3,0,0);
   \coordinate (C) at (3,5,0);
           \coordinate (v1) at (1.55,2.6,0);
        		
\coordinate (OP) at (0,0,0);
\coordinate (BP) at (4,0,0);
   \coordinate (CP) at (0,6,0);
 \coordinate (v1P) at (0,3,0);
    \coordinate (vl1) at (4,3,0);
      \coordinate (vl3) at (4,6,0);
          
\draw (O)--(B);
\draw (B)--(C);
\draw[blue] (Bhalf)--(v1);
\draw (O)--(C);
\draw[blue] (v1)--(B);

\node at (O) [below] {\small{$\ell_2$}};	
     \node at (B) [below] {\small{$\ell_1$}};
     \node at (C) [above] {\small{$e$}};
     
     \node at (C) [right] {\small{$v_0$}};	
     \node at (v1) [right,blue] {\small{$v_1$}};
  
\foreach \x in {O,B, Bhalf,C}
   \fill (\x) circle (3pt);
   \foreach \x in {v1}
   \fill[blue] (\x) circle (3pt);

\begin{scope}[every coordinate/.style={shift={(10,0,0)}}]
\draw[->] ([c]OP)--([c]BP);
\draw [->]([c]OP)--([c]CP);

\draw[blue,->] ([c]v1P)--([c]vl1);
\draw[blue,->] ([c]v1P)--([c]vl3);
       
       \node at ([c]OP) [below] {\small{$v_0$}};	
     \node at ([c]v1P) [left,blue] {\small{$v_1$}};
     
      \node at ([c]BP) [right] {\small{$\ell_1$}};
          \node at ([c]CP) [above] {\small{$\ell_2$}};

\draw [decorate,decoration={brace,amplitude=5pt,mirror},xshift=0.4pt,yshift=-0.4pt] ([c]v1P)--([c]OP) node[black,midway,xshift=-0.4cm,yshift=0 cm] {\small{$e$}};

   \fill[black] ([c]OP) circle (3pt);
       \fill[blue] ([c]v1P) circle (3pt);

\end{scope}
\end{tikzpicture}
\caption{$Y_{v_1} \to X_{v_1}$ has reducible fibres.}
\label{fig:flatbutnotbundle} 
\end{figure}
\noindent On the left is the height-$1$ slice of the conical subdivision of $\Sigma \times \tau = \RR^3_{\geq 0}$; on the right is the polyhedral subdivision of $\Sigma$ with parameter $e$. These subdivisions are bijective on supports.

The collapsing morphism $Y_{v_1} \to X_{v_1}=D_2$ is a flat family of rational curves, but is not a toric variety bundle: the general fibre is smooth but the fibre over $D_1 \cap D_2$ is nodal with two smooth components. Explicitly, $Y_{v_1}$ is the blowup of the $\PP^1$-bundle $\mathbb P_{D_2}(\mathcal O(D_2) \oplus\mathcal O)$, at the intersection of the infinity section with the fibre over $D_1 \cap D_2$.
\end{example}

\begin{example}\label{example: not flat}
Take $X=\Aaff^{\!3}$ with $D=D_1+D_2+D_3$ the three coordinate planes. Then $\Sigma=\RR^3_{\geq 0}$ with coordinates $\ell_1,\ell_2,\ell_3$. Consider the tropical expansion illustrated in Figure~\ref{fig:notflat}, with base cone $\tau=\RR_{\geq 0}$ with coordinate $e$. This is an open subdivision $\Upsilon \to \Sigma \times \tau$: in the polyhedral picture on the right, it consists only of the black coordinate lines and the blue rays, while in the height-$1$ slice on the left, it consists only of the $1$-skeleton. In particular, $\Upsilon$ only contains cones of dimension $2$ or less. Note that the image of the fibrewise collapsing morphism $\Upsilon \to \Sigma$ intersects the interior of the maximal cone of $\Sigma=\RR_{\geq 0}^3$.

\begin{figure}[h]
	\centering
		\centering
		\begin{tikzpicture} [scale=.6]
		\tikzstyle{every node}=[font=\normalsize]
		\tikzset{arrow/.style={latex-latex}}
		
		\tikzset{cross/.style={cross out, draw, thick,
         minimum size=2*(#1-\pgflinewidth), 
         inner sep=1.2pt, outer sep=1.2pt}}

\coordinate (O) at (0,0,-2);
 \coordinate (B) at (6,0,0);
   \coordinate (C) at (2,6,0);
      \coordinate (A) at (1,0,8);
      \coordinate (v1) at (0.5,0,3);
         \coordinate(L) at (4.3,2.6,0);

\coordinate (OP) at (0,0,0);
\coordinate (BP) at (6,0,0);
   \coordinate (CP) at (0,6,0);
      \coordinate (AP) at (0,0,8);

 \coordinate (v1P) at (0,0,5);
       \coordinate (vl1) at (6,0,5);
      \coordinate (vl2) at (0,6,5);
    \coordinate (vl3) at (5,6,5);

\begin{scope}[every coordinate/.style={shift={(-1,0,0)}}]
          
\draw ([c]O)--([c]B);
\draw ([c]B)--([c]C);
\draw ([c]O)--([c]C);
\draw([c]A)--([c]O);
\draw([c]A)--([c]B);
\draw([c]A)--([c]C);

\draw[dashed,blue] ([c]v1)--([c]B);
\draw[dashed,blue] ([c]v1)--([c]C);
\draw[dashed,blue] ([c]v1)--([c]L);
\draw[dashed,blue] ([c]L)--([c]O);
\draw[dashed,blue] ([c]L)--([c]A);

\node at ([c]O) [below] {\small{$\ell_1$}};	
     \node at ([c]B) [below] {\small{$\ell_2$}};
     \node at ([c]C) [above] {\small{$\ell_3$}};
          \node at ([c]A) [left] {{$e$}};
     \node at ([c]L) [right] {\small{$w$}};

     \node at ([c]v1) [below,blue] {\small{$v_1$}};

\foreach \x in {O,B, C,L,A}
   \fill ([c]\x) circle (3pt);
   \foreach \x in {v1}
   \fill [blue]([c]\x) circle (3pt);

\end{scope}

\begin{scope}[every coordinate/.style={shift={(10,0,0)}}]

\draw [decorate,decoration={brace,amplitude=5pt,mirror},xshift=0.4pt,yshift=-0.4pt] ([c]v1P)--([c]OP) node[black,midway,xshift=0.25cm,yshift=-.2 cm] {\small{$e$}};

\draw[->] ([c]OP)--([c]BP);
\draw [->]([c]OP)--([c]CP);
\draw [->]([c]OP)--([c]AP);

\draw[blue,->] ([c]v1P)--([c]vl3);
\draw[blue,->] ([c]v1P)--([c]vl1);
\draw[blue,->] ([c]v1P)--([c]vl2);

   \fill ([c]OP) circle (3pt);
       \fill [blue]([c]v1P) circle (3pt);

       \node at ([c]OP) [xshift=0.2cm,yshift=0.15cm] {\small{$v_0$}};	
     \node at ([c]v1P) [xshift=-0.25cm,yshift=0.05cm,blue] {\small{$v_1$}};
     
     
      \node at ([c]BP) [right] {\small{$\ell_2$}};
          \node at ([c]CP) [above] {\small{$\ell_3$}};
           \node at ([c]AP) [right] {\small{$\ell_1$}};
           \node at ([c]vl3) [above,blue] {{$\subalign{\ell_1 & =e\\ \ell_2 & =\ell_3}$}};

\end{scope}

\end{tikzpicture}
\caption{$Y_{v_1}\to X_{v_1}$ is not flat.}
\label{fig:notflat} 
\end{figure}

We have $X_{v_1}=D_1$ and the morphism $Y_{v_1} \to D_1$ is toric, with fan map
\begin{equation} \label{eqn: second example toric map} \Sigma_{Y_{v_1}} \to \Sigma_{D_1}\end{equation}
given by the lattice morphism $N_{\Upsilon}/N_{\omega_{v_1}} \to N_{\Sigma}/N_{\sigma_{v_1}}$. The ray
\[ (\RR_{\geq 0} v_1+ \RR_{\geq 0}w)/(N_{\omega_{v_1}}\otimes \R) \]
 in $\Sigma_{Y_{v_1}}$ corresponds to the ray in the polyhedral subdivision given by $\{ \ell_1=e, \ell_2 = \ell_3\}$. Under \eqref{eqn: second example toric map} it is mapped onto the diagonal of $\Sigma_{D_1}=\RR^2_{\geq 0}$. By \cite[Lemma~4.1]{AbramovichKaru} we conclude that $Y_{v_1} \to D_1$ is not flat. Geometrically: the fibre over a general point of $D_1$ is $1$-dimensional, but the fibre over the point $D_1 \cap D_2 \cap D_3$ is $2$-dimensional. Combinatorial semistable reduction \cite{AbramovichKaru,MolchoSS,ALT-Semistable} can be used to flatten the morphism, by subdividing $\Sigma_{D_1}$ along the diagonal. This corresponds to blowing up $D_1$ at the point $D_1 \cap D_2 \cap D_3$.
\end{example}

\subsection{Standard mixing collection} \label{sec: standard mixing collection} The locally-closed stratum $X_v^\circ \hookrightarrow X$ is obtained from the closed stratum $X_v$ by removing its intersection with all boundary components which do not contain $X_v$. It sits in a fibre diagram (see Section~\ref{sec: strata in Artin fans}):
\bcd
X_v^\circ \ar[r,hook] \ar[d] \ar[rd,phantom,"\square"] & X \ar[d] \\
\Bcal T_{\sigma_v} \ar[r,hook] & \Acal_\Sigma.
\ecd
The morphism $X_v^\circ \to \Bcal T_{\sigma_v}$ gives a mixing collection on $X_v^\circ$ with lattice $N_{\sigma_v}$. We refer to this as the \textbf{standard mixing collection}.

When $\sigma_v$ is smooth, the lattice $N_{\sigma_v}$ has a natural basis given by the primitive ray generators, and the mixing collection is equivalent to the list $\OO_X(D_\rho)|_{X_v^\circ}$ for $\rho \in \sigma_v(1)$. In this case there is a natural splitting of the normal bundle
\[ N_{X_v^\circ|X} = \bigoplus_{\rho \in \sigma_v(1)} \OO_X(D_\rho)|_{X_v^\circ}.\]

\subsection{Torus bundles $Y_v^\circ$} \label{sec: Yvcirc}
We first consider the locally-closed stratum
\[ Y_v^\circ \hookrightarrow Y_v.\] 
This is the open subvariety of $Y_v$ obtained by removing its intersection with all boundary components of $\Xcal_\Upsilon$ which do not contain $Y_v$.
\begin{proposition} \label{lem: Yvcirc as torus bundle} The restricted morphism
\[ \rho_v \colon Y_v^\circ \to X_v^\circ \]
is a principal torus bundle, with structure group $T_{\sigma_v}$. It is the principal bundle corresponding to the standard mixing collection of Section~\ref{sec: standard mixing collection}.
\end{proposition}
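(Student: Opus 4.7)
The strategy is to realise $Y_v^\circ$ as an explicit fibre product of Artin fans and then apply Proposition~\ref{tropical position map gives identification of lattices}. Using the cartesian presentation $\Xcal_\Upsilon = (X \times U_\tau) \times_{\Acal_{\Sigma \times \tau}} \Acal_\Upsilon$ of the open toroidal modification, together with the fact that $Y_v^\circ \subseteq \Xcal_\Upsilon$ is the preimage of the open stratum $\Bcal T_{\omega_v} \hookrightarrow \Acal_\Upsilon$, one obtains
\[ Y_v^\circ = (X \times U_\tau) \times_{\Acal_{\Sigma \times \tau}} \Bcal T_{\omega_v}. \]
Since $\p\colon \omega_v \to \tau$ is an isomorphism and $\sigma_v$ is by construction the minimal cone of $\Sigma$ containing $\rtrop(\omega_v)$, the interior of $\omega_v$ is mapped into the interior of $\sigma_v \times \tau$, so this map factors through the locally-closed stratum $\Bcal T_{\sigma_v \times \tau}$. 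Splitting the fibre product along this factorisation, and using $\Acal_{\Sigma \times \tau} = \Acal_\Sigma \times \Acal_\tau$ to compute $(X \times U_\tau) \times_{\Acal_{\Sigma \times \tau}} \Bcal T_{\sigma_v \times \tau} = X_v^\circ \times \{0\} \cong X_v^\circ$, reduces the statement to analysing $Y_v^\circ = X_v^\circ \times_{\Bcal T_{\sigma_v \times \tau}} \Bcal T_{\omega_v}$.

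The torsor structure then drops out of Proposition~\ref{tropical position map gives identification of lattices}: dualising the isomorphism $(N_{\sigma_v} \times N_\tau)/N_{\omega_v} \cong N_{\sigma_v}$ yields a short exact sequence of tori
\[ 0 \to T_{\omega_v} \to T_{\sigma_v \times \tau} \to T_{\sigma_v} \to 0, \]
which makes $\Bcal T_{\omega_v} \to \Bcal T_{\sigma_v \times \tau}$ into a principal $T_{\sigma_v}$-bundle, as witnessed by the cartesian square $\Bcal T_{\omega_v} = \pt \times_{\Bcal T_{\sigma_v}} \Bcal T_{\sigma_v \times \tau}$. Base-change along $X_v^\circ \to \Bcal T_{\sigma_v \times \tau}$ preserves this structure, so $Y_v^\circ \to X_v^\circ$ is a principal $T_{\sigma_v}$-bundle.

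The remaining point, and the main residual content of the proof, is to identify this bundle with the one prescribed by the standard mixing collection of Section~\ref{sec: standard mixing collection}. The classifying morphism $X_v^\circ \to \Bcal T_{\sigma_v \times \tau}$ splits as the product of the standard mixing collection $X_v^\circ \to \Bcal T_{\sigma_v}$ with the trivial torsor $\{0\} \to \Bcal T_\tau$, so the induced bundle on $X_v^\circ$ is classified by the composite $X_v^\circ \to \Bcal T_{\sigma_v \times \tau} \to \Bcal T_{\sigma_v}$. A short character-level calculation, using the explicit cokernel formula $(n,m) \mapsto n - \varphi_v(m)$ of Proposition~\ref{tropical position map gives identification of lattices} together with the fact that $\{0\} \to \Bcal T_\tau$ sends every character of $T_\tau$ to the trivial line bundle, shows that only the $T_{\sigma_v}$-factor survives and the composite recovers the standard mixing collection on the nose.
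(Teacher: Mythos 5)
Your proof is correct and takes essentially the same route as the paper's: both arrive at the cartesian square $Y_v^\circ = X_v^\circ \times_{\Bcal T_{\sigma_v \times \tau}} \Bcal T_{\omega_v}$ (the paper via a cube and diagram chase, you via a direct factorisation of the fibre product through $\Bcal T_{\sigma_v \times \tau}$), both invoke Proposition~\ref{tropical position map gives identification of lattices} to produce the short exact sequence of tori, and both identify $\Bcal T_{\omega_v} \to \Bcal T_{\sigma_v \times \tau}$ as a $T_{\sigma_v}$-torsor by the same classifying-stack fact (which the paper isolates and proves as Lemma~\ref{lem: BH to BG a principal bundle} and you simply assert). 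Your identification of the mixing collection is, if anything, slightly more explicit than the paper's, correctly exploiting that $\{0\} \to \Bcal T_\tau$ is the trivial torsor so that the $\varphi_v$-dependent $T_\tau$-contribution to the cokernel map vanishes.
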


\begin{remark} The identification of the structure group with $T_{\sigma_v}$ is part of \cite[Theorem~1.8]{CN21}.\end{remark}

\begin{proof} The cone $\sigma_v \times \tau \in \Sigma \times \tau$ defines (see Section~\ref{sec: strata in Artin fans}) an open substack $\Acal_{\sigma_v \times \tau} \hookrightarrow \Acal_{\Sigma \times \tau}$. The composition
\[ \Bcal T_{\sigma_v \times \tau} \hookrightarrow \Acal_{\sigma_v \times \tau} \hookrightarrow \Acal_{\Sigma \times \tau} \]
defines a locally-closed substack whose pullback is the corresponding locally-closed stratum
\bcd
X_v^\circ \ar[r,hook] \ar[d] \ar[rd,phantom,"\square"] & X \times U_\tau \ar[d] \\
\Bcal T_{\sigma_v \times \tau} \ar[r,hook] & \Acal_{\Sigma \times \tau}.
\ecd
Similarly the cone $\omega_v \in \Upsilon$ defines a locally-closed substack $\Bcal T_{\omega_v} \hookrightarrow \Acal_\Upsilon$ whose pullback is the corresponding locally-closed stratum $Y_v^\circ \hookrightarrow \Xcal_\Upsilon$. We obtain a diagram
\bcd
Y_v^\circ \ar[rr,hook] \ar[dd] \ar[rd] & & \Xcal_\Upsilon \ar[dd] \ar[rd] \\
& X_v^\circ \ar[rr,hook,crossing over]  & & X \times U_\tau \ar[dd] \\
\Bcal T_{\omega_v} \ar[rr,hook] \ar[rd] & & \Acal_\Upsilon \ar[rd] \\
& \Bcal T_{\sigma_v \times \tau} \ar[from=uu, crossing over]\ar[rr,hook] & & \Acal_{\Sigma \times \tau}.
\ecd
The back and right faces are cartesian, hence so is their composition. This coincides with the composition of the left and front faces. Since the front face is also cartesian, it follows that the left face is cartesian:
\bcd
Y_v^\circ \ar[d] \ar[r] \ar[rd,phantom,"\square"] & \Bcal T_{\omega_v} \ar[d] \\
X_v^\circ \ar[r] & \Bcal T_{\sigma_v \times \tau}.
\ecd
By Proposition~\ref{tropical position map gives identification of lattices} we have a natural isomorphism $T_{\sigma_v \times \tau} / T_{\omega_v} = T_{\sigma_v}$. Applying Lemma~\ref{lem: BH to BG a principal bundle}, we obtain a cartesian square
\bcd
Y_v^\circ \ar[r] \ar[d] \ar[rd,phantom,"\square"] & \Bcal 0 \ar[d] \\
X_v^\circ \ar[r] & \Bcal T_{\sigma_v}
\ecd
where $\Bcal 0 =\Speck \to \Bcal T_{\sigma_v}$ is the universal principal bundle and $X_v^\circ \to \Bcal T_{\sigma_v}$ is the standard mixing collection of Section~\ref{sec: standard mixing collection}.
\end{proof}

\subsection{Toric variety bundles $Y_v^\bullet$} \label{sec: Yvbullet}
Now consider the open subset of $Y_v$ given by
\[ Y_v^\bullet \colonequals \rho_v^{-1}(X_v^\circ).\]
Note that if $\sigma_v \in \Sigma$ is maximal then $X_v^\circ=X_v$ and so $Y_v^\bullet=Y_v$. In general there is a sequence of open inclusions
\[ Y_v^\circ \hookrightarrow Y_v^\bullet \hookrightarrow Y_v.\] 

\begin{remark} The inclusion $Y_v^\circ \hookrightarrow Y_v^\bullet$ is a fibrewise toric compactification of the principal torus bundle $Y_v^\circ \to X_v^\circ$. If the open subdivision $\Upsilon \to \Sigma \times \tau$ is bijective on supports then the morphism $Y_v^\bullet \to X_v^\circ$ is proper. In contrast, $Y_v^\circ \to X_v^\circ$ is proper only if it is an isomorphism.\end{remark}

 In this section we prove that $Y_v^\bullet$ is obtained from $X_v^\circ$ by applying the mixing construction (Construction~\ref{mixing construction}). We first define the appropriate fibre fan. The subscheme $Y_v^\bullet \hookrightarrow \Xcal_\Upsilon$ is the union of the locally-closed strata indexed by cones in the following set
\[\ \Psi_v \colonequals \left \{ \omega \in \Upsilon \mid \omega_v \subseteq \omega \text{ and } \rtrop(\omega) \subseteq \sigma_v \right \}. \]
Indeed, for $\omega \in \Psi_v$ we have $\tau=\ptrop(\omega_v) \subseteq \ptrop(\omega)$ and so $\ptrop(\omega)=\tau$ which guarantees that $\omega$ indexes a stratum of the central fibre. Similarly, the conditions $\omega_v \subseteq \omega$ and $\rtrop(\omega)\subseteq\sigma_v$ together imply that $\sigma_v \in \Sigma$ is the minimal cone containing $\rtrop(\omega)$. This guarantees that the locally-closed stratum corresponding to $\omega$ maps to $X_v^\circ$ under $\rho_v$.


For every $\omega \in \Psi_v$ the restriction of the open subdivision $\rtrop \times \ptrop \colon \Upsilon \to \Sigma \times \tau$ produces an inclusion
\[ \omega \subseteq (N_{\sigma_v} \times N_\tau) \otimes \R.\] 
By Proposition~\ref{tropical position map gives identification of lattices} we have a natural isomorphism $(N_{\sigma_v} \times N_\tau)/N_{\omega_v} \cong N_{\sigma_v}$. Let
\[ \omega/\omega_v \subseteq N_{\sigma_v} \otimes \R\]
denote the image of $\omega$ in the quotient. This is strictly convex because $\omega_v \subseteq \omega$ is a face.

\begin{definition}\label{def: fibre fan}
The \textbf{fibre fan} associated to $Y_v^\bullet \to X_v^\circ$ is the fan 
\[ \Phi_v \colonequals \{ \omega/\omega_v \mid \omega \in \Psi_v \}\]
in the lattice $N_{\sigma_v}$.
\end{definition}

\begin{remark} The fibre fan is visible in the polyhedral complex $\Upsilon_{\f}$. Restrict this complex to a small neighbourhood around the vertex $v$. Declare $v$ to be the origin, and extend all local polyhedra radially out from this point. Intersecting with $N_{\sigma_v}$ then gives the fan $\Phi_v$. \end{remark}

We now establish the main result constructing $Y_v^\bullet$ from $X_v^\circ$.
\begin{theorem}[Theorem~\ref{thm: Yvbullet introduction}] \label{thm: Yvbullet description} The morphism
\[ \rho_v \colon Y_v^\bullet \to X_v^\circ \]
is a toric variety bundle. It coincides with the output of the mixing construction (Construction~\ref{mixing construction}), with fibre fan $\Phi_v$ (Definition~\ref{def: fibre fan}) and the standard mixing collection (Section~\ref{sec: standard mixing collection}).\end{theorem}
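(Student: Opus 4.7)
My plan is to realise $Y_v^\bullet$ as a cartesian pullback from the Artin fan, trivialise the resulting gerbe using the tropical position map, and then identify the outcome as a mixing bundle. The strategy extends the proof of Proposition~\ref{lem: Yvcirc as torus bundle}, with the cone $\omega_v$ replaced by the entire substar $\Psi_v$.

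First I would introduce the subcomplex $\Upsilon_v \subseteq \Upsilon$ of cones $\omega$ satisfying $\rtrop(\omega) \subseteq \sigma_v$. Then $\Psi_v = \Star(\omega_v, \Upsilon_v)$ is precisely the isotropic star complex of $\omega_v$ inside $\Upsilon_v$, with reduced star $\Upsilon_v/\omega_v = \Phi_v$. Applying Section~\ref{sec: strata in Artin fans}, the isotropic Artin fan $\Bcal_{\Psi_v} \hookrightarrow \Acal_\Upsilon$ is a locally-closed substack exhibited as a $T_{\omega_v}$-gerbe over $\Acal_{\Phi_v}$, and its pullback under the structure morphism $\Xcal_\Upsilon \to \Acal_\Upsilon$ is precisely $Y_v^\bullet$.

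The heart of the argument is trivialising this gerbe. Proposition~\ref{tropical position map gives identification of lattices} identifies $(N_{\sigma_v} \times N_\tau)/N_{\omega_v} = N_{\sigma_v}$ via $(n, s) \mapsto n - \varphi_v(s)$, and the first-factor inclusion $N_{\sigma_v} \hookrightarrow N_{\sigma_v} \times N_\tau$ furnishes an ambient splitting. For every $\omega \in \Psi_v$ this restricts to a splitting $N_\omega = N_{\omega_v} \oplus N_{\omega/\omega_v}$, since $(n, s) \in N_\omega$ together with $(\varphi_v(s), s) \in N_{\omega_v} \subseteq N_\omega$ forces $(n - \varphi_v(s), 0) \in N_\omega$. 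These splittings are compatible with face inclusions and glue to a global trivialisation $\Bcal_{\Psi_v} \cong \Acal_{\Phi_v} \times \Bcal T_{\omega_v}$, echoing Remark~\ref{rmk: isotropic Artin fan smooth complexes} but without a smoothness hypothesis, since here the splittings descend from a single ambient splitting rather than primitive ray generators.

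To conclude, I would combine the cartesian squares $X_v^\circ = (X \times U_\tau) \times_{\Acal_{\Sigma \times \tau}} \Bcal T_{\sigma_v \times \tau}$ and $\Xcal_\Upsilon = (X \times U_\tau) \times_{\Acal_{\Sigma \times \tau}} \Acal_\Upsilon$ with the pullback from the first step. Since $\omega_v$ lies in the relative interior of $\sigma_v \times \tau$, the map $\Bcal_{\Psi_v} \to \Acal_{\Sigma \times \tau}$ factors through $\Bcal T_{\sigma_v \times \tau}$, yielding $Y_v^\bullet = X_v^\circ \times_{\Bcal T_{\sigma_v \times \tau}} \Bcal_{\Psi_v}$. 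Under the tropical-position splitting $\Bcal T_{\sigma_v \times \tau} = \Bcal T_{\sigma_v} \times \Bcal T_{\omega_v}$ the map from $X_v^\circ$ decomposes as (standard mixing collection, trivial torsor); cancelling the $\Bcal T_{\omega_v}$ factor against the trivialisation $\Bcal_{\Psi_v} \cong \Acal_{\Phi_v} \times \Bcal T_{\omega_v}$ leaves
\[ Y_v^\bullet = X_v^\circ \times_{\Bcal T_{\sigma_v}} \Acal_{\Phi_v}. \]
Writing $\Acal_{\Phi_v} = [Z_{\Phi_v}/T_{\sigma_v}]$ and applying the general identity $X \times_{\Bcal G} [Z/G] = (P \times Z)/G$ recognises this fibre product as the mixing bundle $(P \times Z_{\Phi_v})/T_{\sigma_v}$, with $P$ the principal $T_{\sigma_v}$-bundle of the standard mixing collection. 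The main obstacle throughout is the gluing of local splittings in the trivialisation step; their compatibility ultimately rests on $N_{\omega_v}$ being embedded as the graph of $\varphi_v$ inside $N_{\sigma_v} \times N_\tau$, so that the ambient splitting restricts automatically to each $N_\omega$ for $\omega \in \Psi_v$ and hence allows the result to hold without any smoothness hypothesis.
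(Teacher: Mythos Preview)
Your argument is correct and its architecture matches the paper's: both reduce to the cartesian cube over Artin fans, extract the cartesian left face $Y_v^\bullet = X_v^\circ \times_{\Bcal T_{\sigma_v \times \tau}} \Bcal_{\Psi_v}$, and then pass to the cartesian square
\[ Y_v^\bullet = X_v^\circ \times_{\Bcal T_{\sigma_v}} \Acal_{\Phi_v} = X_v^\circ \times_{\Bcal T_{\sigma_v}} [F_v/T_{\sigma_v}], \]
which is then recognised as the mixing bundle. The genuine difference is in how you collapse from $\Bcal T_{\sigma_v \times \tau}$ to $\Bcal T_{\sigma_v}$. The paper does this abstractly: it produces the comparison map $\Bcal_{\Psi_v} \to \Bcal T_{\sigma_v \times \tau} \times_{\Bcal T_{\sigma_v}} \Acal_{\Phi_v}$ and observes that both sides are $T_{\omega_v}$-gerbes over $\Acal_{\Phi_v}$, invoking the fact that any morphism of gerbes banded by the same group is an isomorphism. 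You instead build an explicit, coherent splitting $N_\omega = N_{\omega_v} \oplus N_{\omega/\omega_v}$ for every $\omega \in \Psi_v$ by restricting the single ambient splitting $N_{\sigma_v} \times N_\tau = N_{\omega_v} \oplus N_{\sigma_v}$ supplied by the graph of $\varphi_v$; this yields a direct trivialisation $\Bcal_{\Psi_v} \cong \Acal_{\Phi_v} \times \Bcal T_{\omega_v}$ and lets you cancel the $\Bcal T_{\omega_v}$ factor by hand. Your route is more elementary and makes the absence of any smoothness hypothesis completely transparent (compare Remark~\ref{rmk: isotropic Artin fan smooth complexes}, where such splittings are obtained from primitive ray generators). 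The paper's gerbe argument is less explicit but has the advantage of portability: exactly the same device is reused in the proof of Theorem~\ref{thm: Yv description}, where no global ambient splitting is available.
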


\begin{proof}
Recall the discussion in Sections~\ref{sec: isotropic cone complex}~and~\ref{sec: strata in Artin fans}. To each $\omega \in \Psi_v$ we associate the isotropic cone
\[ (\omega/\omega_v,N_{\omega/\omega_v},N_{\omega}) \]
and hence view $\Psi_v$ as an isotropic cone complex. It is a subcomplex of $\Star(\omega_v,\Upsilon)$. The sequence of embeddings $Y_v^\bullet \hookrightarrow Y_v \hookrightarrow \Xcal_{\Upsilon}$ corresponds to the sequence of isotropic Artin fans $\Bcal_{\Psi_v} \hookrightarrow \Bcal_{\Star(\omega_v,\Upsilon)} \hookrightarrow \Acal_{\Upsilon}$. By the definition of $Y_v^\bullet$ there is a cartesian square
\bcd
Y_v^\bullet \ar[r,hook] \ar[d] \ar[rd,phantom,"\square"] & \Xcal_\Upsilon \ar[d] \\
\Bcal_{\Psi_v} \ar[r,hook] & \Acal_\Upsilon.
\ecd
For $\omega \in \Psi_v$ we have $\omega \subseteq \sigma_v \times \tau$ and so we obtain a homomorphism $T_\omega \to T_{\sigma_v \times \tau}$. Recall that $U_{\omega/\omega_v}$ denotes the affine toric variety associated to the cone $\omega/\omega_v$ in $N_{\omega/\omega_v}$. The compositions $[U_{\omega/\omega_v}/T_\omega] \to \Bcal T_\omega \to \Bcal T_{\sigma_v \times \tau}$ glue along face inclusions to produce a global morphism
\[ \Bcal_{\Psi_v} \to \Bcal T_{\sigma_v \times \tau} \]
which corresponds to the map $\Psi_v \to N_{\sigma_v \times \tau}$ of isotropic cone complexes. We obtain a cube
\begin{equation} \label{eqn: cartesian cube proof of Yvbullet}
\begin{tikzcd}
Y_v^\bullet \ar[rr,hook] \ar[dd] \ar[rd] & & \Xcal_\Upsilon \ar[dd] \ar[rd] \\
& X_v^\circ \ar[rr,hook,crossing over]  & & X \times U_\tau \ar[dd] \\
\Bcal_{\Psi_v} \ar[rr,hook] \ar[rd] & & \Acal_\Upsilon \ar[rd] \\
& \Bcal T_{\sigma_v \times \tau} \ar[from=uu, crossing over]\ar[rr,hook] & & \Acal_{\Sigma \times \tau}
\end{tikzcd}
\end{equation}
and a diagram chase as in the proof of Proposition~\ref{lem: Yvcirc as torus bundle} shows that the left face is cartesian. We now show that there is a fibre square
\begin{equation} \label{eqn: cartesian square isotropic Artin fans}
\begin{tikzcd}
\Bcal_{\Psi_v} \ar[d] \ar[r] \ar[rd,phantom,"\square"] & \Acal_{\Phi_v} \ar[d] \\
\Bcal T_{\sigma_v \times \tau} \ar[r] & \Bcal T_{\sigma_v}
\end{tikzcd}
\end{equation}
where the map $\Bcal T_{\sigma_v \times \tau} \to \Bcal T_{\sigma_v}$ is the gerbe banded by $T_{\omega_v}$ induced by the isomorphism $(N_{\sigma_v} \times N_\tau)/N_{\omega_v} = N_{\sigma_v}$ (see Proposition~\ref{tropical position map gives identification of lattices} and Lemma~\ref{lem: BH to BG a principal bundle}).

For every $\omega \in \Psi_v$ there is a homomorphism of tori $T_{\omega} \to T_{\omega/\omega_v}$ through which the action $T_{\omega} \acts U_{\omega/\omega_v}$ is defined. This gives a morphism
\[ [U_{\omega/\omega_v}/T_\omega] \to [U_{\omega/\omega_v}/T_{\omega/\omega_v}] \]
compatible along face inclusions. Globalising, we obtain $\Bcal_{\Psi_v} \to \Acal_{\Phi_v}$ which we combine with the map $\Bcal_{\Psi_v} \to \Bcal T_{\sigma_v \times \tau}$ constructed above to obtain a morphism
\begin{equation} \label{eqn: map from Bpsiv to fibre product} \Bcal_{\Psi_v} \to \Bcal T_{\sigma_v \times \tau} \times_{\Bcal T_{\sigma_v}} \Acal_{\Phi_v}. \end{equation}
It can be checked locally on the target that the morphism $\Bcal_{\Psi_v} \to \Acal_{\Phi_v}$ is a gerbe banded by $T_{\omega_v}$. On the other hand the morphism $\Bcal T_{\sigma_v \times \tau} \times_{\Bcal T_{\sigma_v}} \Acal_{\Phi_v} \to \Acal_{\Phi_v}$ is a gerbe banded by $T_{\omega_v}$ by Lemma~\ref{lem: BH to BG a principal bundle}. A morphism between two gerbes banded by the same group is automatically an isomorphism (see e.g. \cite[proof of Proposition~5.11]{Brochard}). We conclude that \eqref{eqn: cartesian square isotropic Artin fans} is cartesian (we thank the anonymous referee for suggesting this argument). Composing with the left face of \eqref{eqn: cartesian cube proof of Yvbullet} we obtain
\begin{equation} \label{eqn: Yvbullet over Xvcirc as pullback of universal TVB}
\begin{tikzcd}
Y_v^\bullet \ar[d] \ar[r] \ar[rd,phantom,"\square"] & \Acal_{\Phi_v} \ar[d] \\
X_v^\circ \ar[r] & \Bcal T_{\sigma_v}.
\end{tikzcd}
\end{equation}
It follows from Lemma~\ref{lem: universal toric variety bundle} that $Y_v^\bullet \to X_v^\circ$ is a toric variety bundle, with fibre fan $\Phi_v$ and mixing collection $X_v^\circ \to \Bcal T_{\sigma_v}$.
\end{proof}

\begin{remark}\label{rmk: cut and paste}
Theorem~\ref{thm: Yvbullet description} generalises: for every locally-closed stratum $S \hookrightarrow X_v$ the restriction
\[ \rho_v \colon \rho_v^{-1}(S) \to S \]
is a finite union of toric variety bundles, glued along toric strata. This provides a cut-and-paste description of the morphism $Y_v \to X_v$, as a stratified union of toric variety bundles. See \cite[Proposition~2.1.4]{HuLiuYau} for a parallel in the toric setting. 
\end{remark}

\subsection{Components $Y_v$} \label{sec: Yv closed stratum} We now consider the entire component $Y_v$. We emphasise that if $\sigma_v$ is maximal, then $X_v=X_v^\circ$ and $Y_v=Y_v^\bullet$. In this case Theorem~\ref{thm: Yvbullet description} gives a complete description of $Y_v$. The following discussion is significantly more delicate, and is only required when $\sigma_v$ is not maximal.

We thank the anonymous referee for supplying a crucial suggestion allowing us to remove a superfluous smoothness hypothesis in this section, and identifying an important conceptual mistake in an earlier version of the argument.

\subsubsection{Bundle criterion} As discussed in Section~\ref{sec: examples}, the morphism $Y_v \to X_v$ is not always a toric variety bundle. We give a sufficient criterion, generalising the corresponding criterion in toric geometry \cite[Theorem~3.3.19]{CLS}.

Recall the fibre fan $\Phi_v$ of Definition~\ref{def: fibre fan}, and note that there is a natural subcomplex inclusion:
\[ \Phi_v \hookrightarrow \Upsilon/\omega_v. \]

\begin{theorem}[Theorem~\ref{thm: Yv introduction}] \label{thm: Yv description}
The morphism $Y_v \to X_v$ is a toric variety bundle if there exists an embedding of cone complexes (i.e. a subcomplex inclusion)
\[ s \colon \Sigma/\sigma_v \hookrightarrow \Upsilon/\omega_v \]
which is a section of the projection $\Upsilon/\omega_v \to \Sigma/\sigma_v$ and is such that:
\begin{equation} \label{eqn: isomorphism bundle criterion} \Upsilon/\omega_v = s(\Sigma/\sigma_v) + \Phi_v. \end{equation}
\end{theorem}

Condition \eqref{eqn: isomorphism bundle criterion} means that every $\omega/\omega_v \in \Upsilon/\omega_v$ has faces belonging to the given subcomplexes
\[ s(\sigma/\sigma_v) \in s(\Sigma/\sigma_v), \qquad \theta/\omega_v \in \Phi_v, \]
such that $\omega/\omega_v = s(\sigma/\sigma_v) + \theta/\omega_v$, and conversely every such pair of faces produces a cone of $\Upsilon/\omega_v$.
\begin{remark} The condition can be rephrased as stating that there exists an isomorphism
\[ \alpha \colon \Upsilon/\omega_v \cong \Sigma/\sigma_v \times \Phi_v \]
such that the induced projection
\[ \Upsilon/\omega_v \xrightarrow{\alpha} \Sigma/\sigma_v \times \Phi_v \to \Sigma/\sigma_v \]
agrees with the natural projection \eqref{eqn: commuting square boundary strata}, and the induced inclusion
\[ \Phi_v \hookrightarrow \Sigma/\sigma_v \times \Phi_v \xrightarrow{\alpha^{-1}} \Upsilon/\omega_v \]
agrees  with the natural inclusion (Definition~\ref{def: fibre fan}).
\end{remark}

\begin{proof}[Proof of Theorem~\ref{thm: Yv description}] There is a diagram
\bcd
Y_v \ar[rr,hook] \ar[dd] \ar[rd] & & \Xcal_\Upsilon \ar[dd] \ar[rd] \\
& X_v \ar[rr,hook,crossing over]  & & X \times U_\tau \ar[dd] \\
\Bcal_{\Star(\omega_v,\Upsilon)} \ar[rr,hook] \ar[rd] & & \Acal_\Upsilon \ar[rd] \\
& \Bcal_{\Star(\sigma_v \times \tau, \Sigma \times \tau)} \ar[from=uu, crossing over]\ar[rr,hook] & & \Acal_{\Sigma \times \tau}
\ecd
and as in the proof of Proposition~\ref{lem: Yvcirc as torus bundle} we see that the left face is cartesian. It is thus sufficient to prove that the morphism
\begin{equation} \label{eqn: map rhov isotropic Artin fans} \rho_v \colon \Bcal_{\Star(\omega_v,\Upsilon)} \to  \Bcal_{\Star(\sigma_v \times \tau, \Sigma \times \tau)} \end{equation}
is a toric variety bundle. On the domain, there is a $T_{\omega_v}$-gerbe (see Lemma~\ref{lem: isotropic Artin fan as a gerbe})
\begin{equation} \label{eqn: Yv proof first map} \Bcal_{\Star(\omega_v,\Upsilon)} \to \Acal_{\Upsilon/\omega_v} = \Acal_{\Sigma/\sigma_v} \times \Acal_{\Phi_v} \end{equation}
where the identity follows from \eqref{eqn: isomorphism bundle criterion}.

On the codomain, first note that $\Star(\sigma_v\! \times\! \tau, \Sigma\! \times\! \tau)$ is the following isotropic cone complex:
\begin{equation} \label{eqn: Star as an isotropic cone complex} \Star(\sigma_v\! \times\! \tau, \Sigma\! \times\! \tau) = \{ (\sigma/\sigma_v, N_{\sigma/\sigma_v}, N_{\sigma \times \tau}) : \sigma \in \Sigma, \sigma_v \subseteq \sigma \} \end{equation}
where the surjection of lattices is given by the composite $N_{\sigma \times \tau} \to N_\sigma \to N_{\sigma/\sigma_v}$. Given $\sigma \in \Sigma$ with $\sigma_v \subseteq \sigma$ consider the following morphism of short exact sequences
\bcd
0 \ar[r] & N_{\omega_v} \ar[r] \ar[d] & N_{\sigma \times \tau} \ar[r] \ar[d,equals] & N_{(\sigma \times \tau)/\omega_v} \ar[r] \ar[d] & 0 \\
0 \ar[r] & N_{\sigma_v \times \tau} \ar[r] & N_{\sigma \times \tau} \ar[r] & N_{\sigma/\sigma_v} \ar[r] & 0
\ecd
where we use the natural inclusion $\omega_v \subseteq \sigma_v \times \tau$ discussed in Section~\ref{sec: tropical position}. The snake lemma then shows that the right vertical map is surjective, with kernel equal to
\[ N_{\sigma_v \times \tau}/N_{\omega_v} \cong N_{\sigma_v} \] 
where the isomorphism is given by Proposition~\ref{tropical position map gives identification of lattices}. We thus obtain a short exact sequence:
\begin{equation} \label{eqn: proof Yv exact sequence} 0 \to N_{\sigma_v} \to N_{(\sigma \times \tau)/\omega_v} \to N_{\sigma/\sigma_v} \to 0.\end{equation}
Define the isotropic cone complex:
\begin{equation} \label{eqn: Omega as an isotropic cone complex} \Omega_v \colonequals \{ (\sigma/\sigma_v, N_{\sigma/\sigma_v}, N_{(\sigma \times \tau)/\omega_v}) : \sigma \in \Sigma, \sigma_v \subseteq \sigma \} \end{equation}
where the surjection of lattices is given by \eqref{eqn: proof Yv exact sequence}. The underlying cone complex is $\Sigma/\sigma_v$ and so there is a natural morphism
\begin{equation} \label{eqn: Yv proof second map} \Bcal_{\Omega_v} \to \Acal_{\Sigma/\sigma_v} \end{equation}
which is a $T_{\sigma_v}$-gerbe by \eqref{eqn: proof Yv exact sequence} and Lemma~\ref{lem: isotropic Artin fan as a gerbe}. Comparing \eqref{eqn: Star as an isotropic cone complex} and \eqref{eqn: Omega as an isotropic cone complex}, we see that there is a natural morphism of isotropic cone complexes $\Star(\sigma_v\! \times\! \tau, \Sigma\! \times\! \tau) \to \Omega_v$ giving rise to a morphism of isotropic Artin fans
\begin{equation} \label{eqn: Yv proof third map} \Bcal_{\Star(\sigma_v \times \tau, \Sigma \times \tau)}  \to \Bcal_{\Omega_v} \end{equation}
which is a $T_{\omega_v}$-gerbe by Lemma~\ref{lem: two quotients compared}, since locally on the base it takes the form:
\[ [U_{\sigma/\sigma_v}/T_{\sigma \times \tau}] \to [U_{\sigma/\sigma_v}/T_{(\sigma \times \tau)/\omega_v}]. \]
We now construct a morphism $\Upsilon/\omega_v \to \Omega_v$. As an isotropic cone complex:
\[ \Upsilon/\omega_v = \{ (\omega/\omega_v, N_{\omega/\omega_v}, N_{\omega/\omega_v}) : \omega \in \Upsilon, \omega_v \subseteq \omega \}. \]
Given $\omega \in \Upsilon$ with $\omega_v \subseteq \omega$ there is a unique minimal cone $\sigma \in \Sigma$ with $\sigma_v \subseteq \sigma$ such that $\omega \subseteq \sigma \times \tau$. This gives inclusions of lattices
\bcd
N_{\omega_v} \ar[r,hook] \ar[d,hook] & N_{\omega} \ar[d,hook] \\
N_{\sigma_v \times \tau} \ar[r,hook] & N_{\sigma \times \tau}
\ecd
which give rise to an inclusion
\[ N_{\omega/\omega_v} \hookrightarrow N_{(\sigma \times \tau)/\omega_v}.\]
Composing with the surjection in \eqref{eqn: proof Yv exact sequence} produces a map $N_{\omega/\omega_v} \to N_{\sigma/\sigma_v}$ which respects the cones, and we thus obtain a morphism:
\begin{equation} \label{eqn: Yv proof fourth map} \Upsilon/\omega_v \to \Omega_v. \end{equation}
We arrive at the following square, which commutes by a direct examination of lattices:
\bcd
\Bcal_{\Star(\omega_v,\Upsilon}) \ar[d,"\rho_v"] \ar[r,"{\eqref{eqn: Yv proof first map}}"] & \Acal_{\Upsilon/\omega_v} \ar[d,"{\eqref{eqn: Yv proof fourth map}}"] \\
\Bcal_{\Star(\sigma_v \times \tau, \Sigma \times \tau)} \ar[r,"{\eqref{eqn: Yv proof third map}}"] & \Bcal_{\Omega_v}
\ecd
Both horizontal morphisms are gerbes banded by $T_{\omega_v}$. Since a morphism between gerbes banded by the same group is automatically an isomorphism, it follows that this square is cartesian.

A direct examination of lattices (appealing to the equivalence of categories \cite[Theorem~3]{CavalieriChanUlirschWise} between cone stacks and Artin fans) shows that the composite of \eqref{eqn: Yv proof fourth map} with \eqref{eqn: Yv proof second map} coincides with the natural projection $\Acal_{\Upsilon/\omega_v} \to \Acal_{\Sigma/\sigma_v}$. Recall that we are given an embedding $\Sigma/\sigma_v \hookrightarrow \Upsilon/\omega_v$ which is a section of the projection. It follows that the composite
\[ \Acal_{\Sigma/\sigma_v} \hookrightarrow \Acal_{\Upsilon/\omega_v} \xrightarrow{\text{\eqref{eqn: Yv proof fourth map}}} \Bcal_{\Omega_v} \xrightarrow{\text{\eqref{eqn: Yv proof second map}}} \Acal_{\Sigma/\sigma_v} \]
is the identity. We thus obtain a preferred trivialisation of the $T_{\sigma_v}$-gerbe $\Bcal_{\Omega_v}$ over $\Acal_{\Sigma/\sigma_v}$. This produces the following diagram
\begin{equation} \label{eqn: Yv proof big diagram}
\begin{tikzcd}
\Bcal_{\Star(\omega_v,\Upsilon}) \ar[d,"\rho_v"] \ar[r] \ar[rd,phantom,"\square"] & \Acal_{\Upsilon/\omega_v} \ar[d] \ar[r,equals] & \Acal_{\Sigma/\sigma_v} \times \Acal_{\Phi_v} \ar[d] \\
\Bcal_{\Star(\sigma_v \times \tau, \Sigma \times \tau)} \ar[r] & \Bcal_{\Omega_v} \ar[r,"\cong"] & \Acal_{\Sigma/\sigma_v} \times \Bcal T_{\sigma_v}
\end{tikzcd}
\end{equation}
where the right vertical map is the external product of the identity $\Acal_{\Sigma/\sigma_v} \to \Acal_{\Sigma/\sigma_v}$ and the natural map $\Acal_{\Phi_v} \to \Bcal T_{\sigma_v}$.

We now show that the right square commutes, and for this it suffices to work locally. Each $\omega/\omega_v \in \Upsilon/\omega_v$ contains faces $\sigma/\sigma_v \in \Sigma/\sigma_v$ and $\theta/\omega_v \in \Phi_v$ such that:
\[ \omega/\omega_v = \sigma/\sigma_v \times \theta/\omega_v.\]
Restricting to the corresponding open set in $\Acal_{\Sigma/\sigma_v} \times \Acal_{\Phi_v}$ we must show that the associated diagram of lattices commutes:
\begin{equation}
\label{eqn: commuting square lattices}
\begin{tikzcd}
N_{\omega/\omega_v} \ar[d] & N_{\sigma/\sigma_v} \times N_{\theta/\omega_v} \ar[d] \ar[l,"=" above] \\
N_{(\sigma \times \tau)/\omega_v} \ar[r,"\cong"] & N_{\sigma/\sigma_v} \times N_{\sigma_v}.
\end{tikzcd}
\end{equation}
Since $\Sigma/\sigma_v \to \Upsilon/\omega_v \to \Sigma/\sigma_v$ is the identity, \eqref{eqn: commuting square lattices} commutes when we restrict to $N_{\sigma/\sigma_v}$ in the domain and project onto $N_{\sigma/\sigma_v}$ in the codomain. On the other hand, recall that the isomorphism
\[ N_{(\sigma \times \tau)/\omega_v} \cong N_{\sigma/\sigma_v} \times N_{\sigma_v} \]
is induced by the section $N_{\sigma/\sigma_v} \to N_{(\sigma \times \tau)/\omega_v}$ which splits the exact sequence \eqref{eqn: proof Yv exact sequence}. It follows that \eqref{eqn: commuting square lattices} commutes when we restrict to $N_{\sigma/\sigma_v}$ in the domain and project onto $N_{\sigma_v}$ in the codomain, since both routes produce the zero map.

On the other hand commutativity when restricted to $N_{\theta/\omega_v}$ follows from the commutativity of:
\bcd
N_{\omega/\omega_v} \ar[d] & & N_{\theta/\omega_v} \ar[ll] \ar[d] \\
N_{(\sigma \times \tau)/\omega_v} & N_{(\sigma_v \times \tau)/\omega_v} \ar[l,hook] & N_{\sigma_v} \ar[l,"\cong" above]
\ecd
noting the natural isomorphism $ N_{\sigma_v} \cong N_{(\sigma_v \times \tau)/\omega_v}$ (Proposition~\ref{tropical position map gives identification of lattices}) which was used to construct the map $N_{\theta/\omega_v} \to N_{\sigma_v}$ in the first place (Definition~\ref{def: fibre fan}). We conclude that \eqref{eqn: commuting square lattices}, and therefore \eqref{eqn: Yv proof big diagram}, commutes. From this we obtain the cartesian square
\bcd
\Bcal_{\Star(\omega_v,\Upsilon)} \ar[r] \ar[d,"\rho_v"] \ar[rd,phantom,"\square"] & \Acal_{\Phi_v} \ar[d] \\
\Bcal_{\Star(\sigma_v \times \tau,\Sigma \times \tau)} \ar[r] & \Bcal T_{\sigma_v}
\ecd
which shows that $\rho_v$ is a toric variety bundle, as required.
\end{proof}

\begin{remark} The section $s$ in Theorem~\ref{thm: Yv description} is unique if it exists. Indeed suppose $s$ exists and fix a cone $\sigma/\sigma_v \in \Sigma/\sigma_v$. Consider those cones of $\Upsilon/\omega_v$ which map to $\sigma/\sigma_v$ or one of its faces. These form a subcomplex of $\Upsilon/\omega_v$, and by the product structure this subcomplex is isomorphic to $\sigma/\sigma_v \times \Phi_v$. In particular, only a single cone of this subcomplex is mapped isomorphically onto $\sigma/\sigma_v$. There is therefore a unique choice for the section $s$ restricted to $\sigma/\sigma_v$.
\end{remark}

\begin{remark} We do not know whether the converse to Theorem~\ref{thm: Yv description} holds. We now explain the difficulty. Following Construction~\ref{mixing construction}, a \textbf{toric variety bundle structure} for a morphism $Y \to X$ consists of an open subset $P \subseteq Y$ with $P \to X$ a principal $T_Z$-bundle, and an isomorphism $Y \cong (P \times Z)/T_Z$. Following Section~\ref{sec: universal toric variety bundle} this is equivalent to the data of a $2$-cartesian square:
\[
\begin{tikzcd}
Y \ar[d] \ar[r] \ar[rd,phantom,"\square"] & {[Z/T_Z]} \ar[d] \\
X \ar[r] & \Bcal T_Z.
\end{tikzcd}
\]
Pullbacks of toric variety bundle structures are defined by composing such squares. A toric variety bundle typically admits infinitely many toric variety bundle structures.
 
Returning to Theorem~\ref{thm: Yv description}, there exist toric variety bundle structures for $Y_v \to X_v$ which are not pulled back from any toric variety bundle structure for the corresponding morphism
\[ \Bcal_{\Star(\omega_v,\Upsilon)} \to  \Bcal_{\Star(\sigma_v \times \tau, \Sigma \times \tau)} \]
of isotropic Artin fans \eqref{eqn: map rhov isotropic Artin fans}. This is the case, for instance, if pulling back the open subset $P \subseteq Y_v$ along $Y_v^\bullet \hookrightarrow Y_v$ produces a different open subset than $Y_v^\circ \subseteq Y_v^\bullet$. This forecloses any conclusions involving the associated cone complexes.

To establish the converse to Theorem~\ref{thm: Yv description}, we would therefore need to argue that if $Y_v \to X_v$ is a toric variety bundle, then it in fact admits a toric variety bundle structure which is pulled back from the isotropic Artin fans. This may be true, but proving it seems to require new insights.
\end{remark}

\begin{remark}
If $Y_v \to X_v$ is not a toric variety bundle, it can be transformed into one by toroidal modifications of the source and target. This is the strongest statement one can hope for in general: for instance if $X_v=X$ then the morphism $Y_v \to X_v$ can be an arbitrary toroidal modification.
\end{remark}

\subsubsection{Determining the mixing collection}\label{sec: determining the mixing collection for Yv}

Assuming that the criterion of Theorem~\ref{thm: Yv description} is satisfied, the morphism $Y_v \to X_v$ is a toric variety bundle and is therefore obtained by applying the mixing construction (Construction~\ref{mixing construction}). The fibre fan is simply $\Phi_v$ as given in Definition~\ref{def: fibre fan}. The mixing collection is given by the composite:
\[ X_v \to \Bcal_{\Star(\sigma_v \times \tau,\Sigma \times \tau)} \to \Bcal T_{\sigma_v} \]
where the second map was constructed in the proof of Theorem~\ref{thm: Yv description}, using the subcomplex inclusion $\Sigma/\sigma_v \hookrightarrow \Upsilon/\omega_v$ to trivialise the $T_{\sigma_v}$-gerbe $\Bcal_{\Omega_v}$ over $\Acal_{\Sigma/\sigma_v}$.

When $\Sigma$ is a smooth cone complex this mixing collection can be calculated explicitly. In this case the lattice $N_{\sigma_v}$ has a natural basis given by the primitive generators of the rays $\rho \in \sigma_v(1)$. The list of line bundles $\OO_{X_v}(D_\rho)$ for $\rho \in \sigma_v(1)$ defines a mixing collection on $X_v$. This is precisely the mixing collection corresponding to the composite
\[ X_v \to \Bcal_{\Star(\sigma_v,\Sigma)} = \Acal_{\Sigma/\sigma_v} \times \Bcal T_{\sigma_v} \to \Bcal T_{\sigma_v}\]
where smoothness of $\Sigma$ ensures a canonical isomorphism $\Bcal_{\Star(\sigma_v,\Sigma)} = \Acal_{\Sigma/\sigma_v} \times \Bcal T_{\sigma_v}$ (see Remark~\ref{rmk: isotropic Artin fan smooth complexes}). We refer to this as the \textbf{standard mixing collection}. The mixing collection inducing the toric variety bundle $Y_v \to X_v$ differs from the standard mixing collection in general. The difference is a sum of divisors supported on the boundary $X_v\setminus X_v^\circ$.

This difference can be calculated explicitly using Lemma~\ref{lem: identifying mixing collection}. For $\rho \in \sigma_v(1)$ we must calculate the pullback of the line bundle $\OO_{X_v}(D_\rho)$ along the projection $Y_v \to X_v$. This can be expressed in terms of piecewise-linear functions on $\Sigma$ and $\Upsilon$, as we now explain.

The divisor $D_\rho \subseteq X$ corresponds to a piecewise-linear function on $\Sigma$ which pulls back to a piecewise-linear function on $\Upsilon$. Translating by the pullback of a linear function on $\tau$ we obtain a piecewise-linear function on $\Upsilon$ which is identically zero on $\omega_v$; this is always possible because $\omega_v \to \tau$ is an isomorphism. 

The corresponding toroidal divisor in $\Xcal_\Upsilon$ intersects the stratum $Y_v$ transversely, so we can directly describe its restriction to $Y_v$. The result is a sum of horizontal toric divisors on $Y_v$, together with divisors pulled back from $X_v$. The former give the left-hand side of \eqref{eqn: divisor relation in toric bundle}, the latter the right-hand side. Thus, the mixing collection is determined.

\begin{example} Take $X$ a smooth variety and $D=D_1+D_2$ the union of two smooth divisors with connected nonempty intersection. Consider the following tropical expansion over $\tau=\RR_{\geq 0}$:
\[
\begin{tikzpicture}[scale=0.7]

\draw[fill=black] (0,0) circle[radius=2pt];
\draw (0,0) node[left]{\small$\ell_1$};
\draw (0,0) -- (6,0);
\draw[fill=black] (6,0) circle[radius=2pt];
\draw (6,0) node[right]{\small$\ell_2$};
\draw (6,0) -- (3,5);
\draw (3,5) node[above]{\small$e$};
\draw (3,5) -- (0,0);
\draw[fill=blue,blue] (3,0) circle[radius=2pt];
\draw[blue] (3,0) -- (3,5);
\draw[blue] (3,0) -- (4.5,2.5);
\draw[fill=blue,blue] (4.5,2.5) circle[radius=2pt];
\draw[blue] (4.4,2.7) node[right]{\small$v$};
\draw[fill=black] (3,5) circle[radius=2pt];

\draw[->] (9,0) -- (13,0);
\draw (13,0) node[right]{\small$\ell_1$};
\draw[->] (9,0) -- (9,5);
\draw (9,5) node[above]{\small$\ell_2$};
\draw [decorate,decoration={brace,amplitude=5pt},xshift=0.4pt,yshift=-0.4pt] (9,0) --(9,2.5) node[black,midway,xshift=-0.35cm,yshift=-0 cm] {\small{$e$}};
\draw[fill=blue,blue] (9,2.5) circle[radius=2pt];
\draw[blue] (9,2.5) node[left]{$v$};
\draw[blue,->] (9,2.5) -- (11.5,5);
\draw[blue] (11.5,5) node[above]{\tiny$\ell_2\!=\!\ell_1\!+\!e$};
\draw[blue,->] (9,0) -- (13,4);
\draw[blue] (13,4) node[above]{\tiny$\ell_1\!=\!\ell_2$};
\draw[fill=black] (9,0) circle[radius=2pt];

\end{tikzpicture}
\]
As usual we have a degenerating family $\pi \colon \Xcal_\Upsilon \to U_\tau = \Aaff^{\! 1}$ and a fibrewise collapsing morphism $\rho \colon \Xcal_\Upsilon \to X$. By Theorem~\ref{thm: Yv description}, the component $Y_v$ of the central fibre is a $\PP^1$-bundle over $X_v=D_2$ and we let $\rho_v \colon Y_v \to X_v$ denote the bundle projection. To calculate the mixing collection, first name the toroidal divisors on $Y_v$ as follows
\[
\begin{tikzpicture}[scale=0.4];

\draw[fill=black] (9,2.5) circle[radius=3pt];
\draw (9,2.5) node[left]{\small$v$};
\draw[->] (9,2.5) -- (9,5);
\draw (9,4.8) node[left]{\scriptsize$E_\infty$};
\draw[->] (9,2.5) -- (9,0);
\draw (9,0.2) node[left]{\scriptsize$E_0$};
\draw[->] (9,2.5) -- (11.5,5);
\draw (11.9,4.8) node{\scriptsize$F$};

\end{tikzpicture}
\]
where $E_0$ and $E_\infty$ are the horizontal toric divisors of the $\PP^1$-bundle, and $F$ is the fibre over ${D_1\cap D_2}$. Note that $\OO_{Y_v}(F) = \rho_v^\star \OO_{X_v}(D_1)$. Consider on $\Upsilon$ the following piecewise-linear functions, represented visually by their slopes along rays:

\[
\begin{tikzpicture}[scale=0.5]

\draw[fill=black] (3,0) circle[radius=2pt];
\draw (3,0) -- (3,5);
\draw (3,0) -- (4.5,2.5);
\draw[fill=black] (4.5,2.5) circle[radius=2pt];
\draw[fill=black] (0,0) circle[radius=2pt];
\draw (0,0) -- (6,0);
\draw[fill=black] (6,0) circle[radius=2pt];
\draw (6,0) -- (3,5);
\draw[fill=black] (3,5) circle[radius=2pt];
\draw (3,5) -- (0,0);

\draw (0,0) node[left]{\small$\ell_1$};
\draw (6,0) node[right]{\small$\ell_2$};
\draw (3,5) node[above]{\small$e$};

\draw[red] (0,0) node[below]{\small$0$};
\draw[red] (3,0) node[below]{\small$1$};
\draw[red] (6,0) node[below]{\small$1$};
\draw[red] (4.5,2.75) node[right]{\small$1$};
\draw[red] (3,5) node[right]{\small$0$};

\draw (3,-1.75) node{$\rho^\star D_2$};

\draw[fill=black] (12,0) circle[radius=2pt];
\draw (12,0) -- (12,5);
\draw (12,0) -- (13.5,2.5);
\draw[fill=black] (13.5,2.5) circle[radius=2pt];
\draw[fill=black] (9,0) circle[radius=2pt];
\draw (9,0) -- (15,0);
\draw[fill=black] (15,0) circle[radius=2pt];
\draw (15,0) -- (12,5);
\draw[fill=black] (12,5) circle[radius=2pt];
\draw (12,5) -- (9,0);

\draw (9,0) node[left]{\small$\ell_1$};
\draw (15,0) node[right]{\small$\ell_2$};
\draw (12,5) node[above]{\small$e$};

\draw[red] (9,0) node[below]{\small$0$};
\draw[red] (12,0) node[below]{\small$0$};
\draw[red] (15,0) node[below]{\small$0$};
\draw[red] (13.5,2.75) node[right]{\small$1$};
\draw[red] (12,5) node[right]{\small$1$};

\draw (12,-1.75) node{$\pi^\star 0$};

\draw[fill=black] (21,0) circle[radius=2pt];
\draw (21,0) -- (21,5);
\draw (21,0) -- (22.5,2.5);
\draw[fill=black] (22.5,2.5) circle[radius=2pt];
\draw[fill=black] (18,0) circle[radius=2pt];
\draw (18,0) -- (24,0);
\draw[fill=black] (24,0) circle[radius=2pt];
\draw (24,0) -- (21,5);
\draw[fill=black] (21,5) circle[radius=2pt];
\draw (21,5) -- (18,0);

\draw (18,0) node[left]{\small$\ell_1$};
\draw (24,0) node[right]{\small$\ell_2$};
\draw (21,5) node[above]{\small$e$};

\draw[red] (18,0) node[below]{\small$0$};
\draw[red] (21,0) node[below]{\small$1$};
\draw[red] (24,0) node[below]{\small$1$};
\draw[red] (22.5,2.75) node[right]{\small$0$};
\draw[red] (21,5) node[right]{\small$-1$};

\draw (21,-1.75) node{$\rho^\star D_2 - \pi^\star 0$};

\end{tikzpicture}
\]
Restricting the divisor $\rho^\star D_2 - \pi^\star 0$ from the total space $\Xcal_\Upsilon$ to the central fibre component $Y_v$ gives
\[ \rho_v^\star \OO_{X_v}(D_2) = \OO_{Y_v}(E_\infty-E_0+F) \]
which we rearrange to obtain
\[ \OO_{Y_v}(E_\infty-E_0) = \rho_v^\star \OO_{X_v}(D_2-D_1).\]
Combining Lemma~\ref{lem: identifying mixing collection} and Theorem~\ref{thm: TVB implies GIT} we conclude
\[ Y_v = \PP_{X_v}(\OO_{X_v}(D_2-D_1) \oplus \OO_{X_v}).\]
\end{example}

\footnotesize
\noindent \textbf{Data Availability Statement.} Data sharing is not applicable to this article as no datasets were generated or analysed during the current study.

\noindent \textbf{Conflict of Interest Statement.} On behalf of all authors, the corresponding author states that there is no conflict of interest.

\bibliographystyle{alpha}
\bibliography{Bibliography.bib}

\newcommand{\etalchar}[1]{$^{#1}$}
\begin{thebibliography}{KKMSD73}

\bibitem[ACM{\etalchar{+}}16]{AbramovichEtAlSkeletons}
D.~Abramovich, Q.~Chen, S.~Marcus, M.~Ulirsch, and J.~Wise.
\newblock Skeletons and fans of logarithmic structures.
\newblock In {\em Nonarchimedean and tropical geometry}, Simons Symp., pages
  287--336. Springer, 2016.

\bibitem[ACMW17]{AbramovichChenMarcusWise}
D.~Abramovich, Q.~Chen, S.~Marcus, and J.~Wise.
\newblock Boundedness of the space of stable logarithmic maps.
\newblock {\em J. Eur. Math. Soc. (JEMS)}, 19(9):2783--2809, 2017.

\bibitem[ACP15]{AbramovichCaporasoPayne}
D.~Abramovich, L.~Caporaso, and S.~Payne.
\newblock The tropicalization of the moduli space of curves.
\newblock {\em Ann. Sci. \'{E}c. Norm. Sup\'{e}r. (4)}, 48(4):765--809, 2015.

\bibitem[AK00]{AbramovichKaru}
D.~Abramovich and K.~Karu.
\newblock Weak semistable reduction in characteristic 0.
\newblock {\em Invent. Math.}, 139(2):241--273, 2000.

\bibitem[ALT20]{ALT-Semistable}
K.~Adiprasito, G.~Liu, and M.~Temkin.
\newblock Semistable reduction in characteristic 0.
\newblock {\em S\'{e}m. Lothar. Combin.}, 82B:Art. 25, 10, 2020.

\bibitem[AW18]{AbramovichWiseBirational}
D.~Abramovich and J.~Wise.
\newblock Birational invariance in logarithmic {G}romov-{W}itten theory.
\newblock {\em Compos. Math.}, 154(3):595--620, 2018.

\bibitem[BC23]{BattistellaCarocci}
L.~Battistella and F.~Carocci.
\newblock A smooth compactification of the space of genus two curves in
  projective space: via logarithmic geometry and {G}orenstein curves.
\newblock {\em Geom. Topol.}, 27(3):1203--1272, 2023.

\bibitem[BGS11]{BurgosGilSombra}
J.~I. Burgos~Gil and M.~Sombra.
\newblock When do the recession cones of a polyhedral complex form a fan?
\newblock {\em Discrete Comput. Geom.}, 46(4):789--798, 2011.

\bibitem[BNR22]{BNR2}
L.~{Battistella}, N.~{Nabijou}, and D.~{Ranganathan}.
\newblock {Gromov-Witten theory via roots and logarithms}.
\newblock {\em arXiv e-prints}, March 2022.
\newblock arXiv:2203.17224. \emph{Geom. Topol.}, to appear.

\bibitem[Bro14]{BrownToric}
J.~Brown.
\newblock Gromov-{W}itten invariants of toric fibrations.
\newblock {\em Int. Math. Res. Not. IMRN}, (19):5437--5482, 2014.

\bibitem[Bro21]{Brochard}
S.~Brochard.
\newblock Duality for commutative group stacks.
\newblock {\em Int. Math. Res. Not. IMRN}, (3):2321--2388, 2021.

\bibitem[CCUW20]{CavalieriChanUlirschWise}
R.~Cavalieri, M.~Chan, M.~Ulirsch, and J.~Wise.
\newblock A moduli stack of tropical curves.
\newblock {\em Forum Math. Sigma}, 8:Paper No. e23, 93, 2020.

\bibitem[CIJ18]{CoatesIritaniJiang}
T.~Coates, H.~Iritani, and Y.~Jiang.
\newblock The crepant transformation conjecture for toric complete
  intersections.
\newblock {\em Adv. Math.}, 329:1002--1087, 2018.

\bibitem[CLS11]{CLS}
D.~A. Cox, J.~B. Little, and H.~K. Schenck.
\newblock {\em Toric varieties}, volume 124 of {\em Graduate Studies in
  Mathematics}.
\newblock American Mathematical Society, Providence, RI, 2011.

\bibitem[CN24]{CN21}
F.~Carocci and N.~Nabijou.
\newblock Rubber tori in the boundary of expanded stable maps.
\newblock {\em J. Lond. Math. Soc. (2)}, 109(3):Paper No. e12874, 36, 2024.

\bibitem[Cox95a]{CoxFunctor}
D.~A. Cox.
\newblock The functor of a smooth toric variety.
\newblock {\em Tohoku Math. J. (2)}, 47(2):251--262, 1995.

\bibitem[Cox95b]{CoxHomogeneous}
D.~A. Cox.
\newblock The homogeneous coordinate ring of a toric variety.
\newblock {\em J. Algebraic Geom.}, 4(1):17--50, 1995.

\bibitem[FR16]{FosterRanganathan}
T.~Foster and D.~Ranganathan.
\newblock Degenerations of toric varieties over valuation rings.
\newblock {\em Bull. Lond. Math. Soc.}, 48(5):835--847, 2016.

\bibitem[Ful93]{FultonToric}
W.~Fulton.
\newblock {\em Introduction to toric varieties}, volume 131 of {\em Annals of
  Mathematics Studies}.
\newblock Princeton University Press, Princeton, NJ, 1993.
\newblock The William H. Roever Lectures in Geometry.

\bibitem[Hal03]{halic2003families}
M.~Halic.
\newblock Families of toric varieties.
\newblock {\em Mathematische Nachrichten}, 261(1):60--84, 2003.

\bibitem[HKM20]{HKM}
J.~{Hofscheier}, A.~{Khovanskii}, and L.~{Monin}.
\newblock {Cohomology rings of toric bundles and the ring of conditions}.
\newblock {\em arXiv e-prints}, June 2020.
\newblock arXiv:2006.12043.

\bibitem[HLY02]{HuLiuYau}
Y.~Hu, C.-H. Liu, and S.-T. Yau.
\newblock Toric morphisms and fibrations of toric {C}alabi-{Y}au hypersurfaces.
\newblock {\em Adv. Theor. Math. Phys.}, 6(3):457--506, 2002.

\bibitem[HMP{\etalchar{+}}22]{HMPPS}
D.~{Holmes}, S.~{Molcho}, R.~{Pandharipande}, A.~{Pixton}, and J.~{Schmitt}.
\newblock {Logarithmic double ramification cycles}.
\newblock {\em arXiv e-prints}, July 2022.
\newblock arXiv:2207.06778.

\bibitem[HPS19]{HolmesPixtonSchmitt}
D.~Holmes, A.~Pixton, and J.~Schmitt.
\newblock Multiplicativity of the double ramification cycle.
\newblock {\em Doc. Math.}, 24:545--562, 2019.

\bibitem[Hu08]{hutoric}
Y.~Hu.
\newblock Toric degenerations of {GIT} quotients, {C}how quotients, and
  {$\overline M_{0,N}$}.
\newblock {\em Asian J. Math.}, 12(1):47--53, 2008.

\bibitem[KKMSD73]{KKMSD}
G.~Kempf, F.~F. Knudsen, D.~Mumford, and B.~Saint-Donat.
\newblock {\em Toroidal embeddings. {I}}.
\newblock Lecture Notes in Mathematics, Vol. 339. Springer-Verlag, Berlin-New
  York, 1973.

\bibitem[KM19]{KavehManon}
K.~{Kaveh} and C.~{Manon}.
\newblock {Toric flat families, valuations, and applications to projectivized
  toric vector bundles}.
\newblock {\em arXiv e-prints}, July 2019.
\newblock arXiv:1907.00543.

\bibitem[Mol21]{MolchoSS}
S.~Molcho.
\newblock Universal stacky semistable reduction.
\newblock {\em Israel J. Math.}, 242(1):55--82, 2021.

\bibitem[MPS23]{MolchoPandharipandeSchmitt}
S.~Molcho, R.~Pandharipande, and J.~Schmitt.
\newblock The {H}odge bundle, the universal 0-section, and the log {C}how ring
  of the moduli space of curves.
\newblock {\em Compos. Math.}, 159(2):306--354, 2023.

\bibitem[MR20]{MandelRuddat}
T.~Mandel and H.~Ruddat.
\newblock Descendant log {G}romov-{W}itten invariants for toric varieties and
  tropical curves.
\newblock {\em Trans. Amer. Math. Soc.}, 373(2):1109--1152, 2020.

\bibitem[MR21]{MolchoRanganathan}
S.~{Molcho} and D.~{Ranganathan}.
\newblock {A case study of intersections on blowups of the moduli of curves}.
\newblock {\em arXiv e-prints}, June 2021.
\newblock arXiv:2106.15194. \emph{Alg. Num. Th.}, to appear.

\bibitem[MR24]{MR20}
D.~Maulik and D.~Ranganathan.
\newblock Logarithmic {D}onaldson--{T}homas theory.
\newblock {\em Forum Math. Pi}, 12:Paper No. e9, 2024.

\bibitem[Mum72]{MumfordAbelian}
D.~Mumford.
\newblock An analytic construction of degenerating abelian varieties over
  complete rings.
\newblock {\em Compositio Math.}, 24:239--272, 1972.

\bibitem[NR22]{MaxContacts}
N.~Nabijou and D.~Ranganathan.
\newblock Gromov-{W}itten theory with maximal contacts.
\newblock {\em Forum Math. Sigma}, 10:Paper No. e5, 34, 2022.

\bibitem[NS06]{NishinouSiebert}
T.~Nishinou and B.~Siebert.
\newblock Toric degenerations of toric varieties and tropical curves.
\newblock {\em Duke Math. J.}, 135(1):1--51, 2006.

\bibitem[Oh21]{OhGIT}
J.~Oh.
\newblock Quasimaps to {GIT} fibre bundles and applications.
\newblock {\em Forum Math. Sigma}, 9:Paper No. e56, 39, 2021.

\bibitem[{Ran}19]{RanganathanProducts}
D.~{Ranganathan}.
\newblock {A note on cycles of curves in a product of pairs}.
\newblock {\em arXiv e-prints}, October 2019.
\newblock arXiv:1910.00239.

\bibitem[Ran22]{DhruvExpansions}
D.~Ranganathan.
\newblock Logarithmic {G}romov-{W}itten theory with expansions.
\newblock {\em Algebr. Geom.}, 9(6):714--761, 2022.

\bibitem[RSPW19a]{RanganathanSantosParkerWise1}
D.~Ranganathan, K.~Santos-Parker, and J.~Wise.
\newblock Moduli of stable maps in genus one and logarithmic geometry, {I}.
\newblock {\em Geom. Topol.}, 23(7):3315--3366, 2019.

\bibitem[RSPW19b]{RanganathanSantosParkerWise2}
D.~Ranganathan, K.~Santos-Parker, and J.~Wise.
\newblock Moduli of stable maps in genus one and logarithmic geometry, {II}.
\newblock {\em Algebra Number Theory}, 13(8):1765--1805, 2019.

\bibitem[SU03]{SankaranUma}
P.~Sankaran and V.~Uma.
\newblock Cohomology of toric bundles.
\newblock {\em Comment. Math. Helv.}, 78(3):540--554, 2003.

\end{thebibliography}

\footnotesize
\noindent Francesca Carocci. EPFL, Switzerland. \href{mailto:francesca.carocci@epfl.ch}{francesca.carocci@epfl.ch}

\noindent Navid Nabijou. Queen Mary University of London, UK. \href{mailto:n.nabijou@qmul.ac.uk}{n.nabijou@qmul.ac.uk}

\end{document}